\theoremstyle{plain}
\newtheorem{theorem}{Theorem}[section]
\newtheorem{lemma}[theorem]{Lemma}
\newtheorem{prop}[theorem]{Proposition}
\theoremstyle{definition}
\newtheorem{definition}[theorem]{Definition}
\newtheorem{remark}[theorem]{Remark}
\newtheorem{example}[theorem]{Example}
\newtheorem{cor}[theorem]{Corollary}
\theoremstyle{remark}
\begin{document}
\title[On best  coapproximations in subspaces of diagonal matrices]{On best coapproximations in subspaces of diagonal matrices}
\author[Sain, Sohel, Ghosh and  Paul  ]{Debmalya Sain, Shamim Sohel, Souvik Ghosh and Kallol Paul }

\newcommand{\acr}{\newline\indent}
\address[Sain]{Department of Mathematics\\ Indian Institute of Science\\ Bengaluru 560012\\ Karnataka \\INDIA}
\email{saindebmalya@gmail.com}

\address[Sohel]{Department of Mathematics\\ Jadavpur University\\ Kolkata 700032\\ West Bengal\\ INDIA}
\email{shamimsohel11@gmail.com}

\address[Ghosh]{Department of Mathematics\\ Jadavpur University\\ Kolkata 700032\\ West Bengal\\ INDIA}
\email{sghosh0019@gmail.com}

\address[Paul]{Department of Mathematics\\ Jadavpur University\\ Kolkata 700032\\ West Bengal\\ INDIA}
\email{kalloldada@gmail.com}

\thanks{The research of Dr. Debmalya Sain is sponsored by SERB N-PDF fellowship under the mentorship of Professor Apoorva Khare. Dr. Sain feels grateful to have the opportunity to acknowledge the loving friendship of his childhood friend Dr. Subhamoy Neogi, an accomplished physician with an empathetic heart. The second and third author would like to thank  CSIR, Govt. of India, for the financial support in the form of Junior Research Fellowship under the mentorship of Prof. Kallol Paul. The research of Prof. Paul  is supported by project MATRICS (MTR/2017/000059)  of  SERB, DST, Govt. of India.}

\subjclass[2010]{Primary 46B20, Secondary 47L05}
\keywords{best coapproximations; coproximinal subspace; co-Chebyshev subspace; diagonal matrices; Birkhoff-James orthogonality }

\begin{abstract}
We characterize the best coapproximation(s) to a given matrix $ T $ out of a given subspace $ \mathbb{Y} $ of the space of diagonal matrices $ \mathcal{D}_n $, by using Birkhoff-James orthogonality techniques and with the help of a newly introduced property, christened the $ * $-Property. We also characterize the coproximinal subspaces and the co-Chebyshev subspaces of $ \mathcal{D}_n $ in terms of the $ * $-Property. We observe that a complete characterization of the best coapproximation problem in $ \ell_{\infty}^n $ follows directly as a particular case of our approach.
\end{abstract}

\maketitle
\section{Introduction.} The study of best coapproximations in Banach spaces was initiated by Franchetti and Furi in \cite{FF}, as a complementary notion to the classical theory of best approximations. Unlike the Hilbert space case, the existence and the uniqueness of best coapproximations are known to be  difficult problems in the setting of Banach spaces, especially from a computational point of view. We refer the readers to \cite{FF, PS} for more information on this topic. The purpose of the present article is to study the best coapproximation problem in subspaces of diagonal matrices, from the perspective of orthogonality. Indeed, we obtain a complete characterization of the best coapproximations to any given square matrix, in the said subspaces of matrices. It should be noted that recently in \cite{S}, the concept of orthogonality has been utilized to address best approximation problems in Banach spaces. Although our present exploration is motivated in spirit by the said approach, the techniques used in this study are completely different from those used in \cite{S}. Let us now introduce the relevant notations and the terminologies to be used throughout the article.\\

We use the symbol $ \mathbb{H} $ to denote a Hilbert space, along with its usual inner product $ \langle~, \rangle $ and its usual norm $ ||.||_2.$ In this article, we will only work with \emph{real} Hilbert spaces. Let $ \perp $ denote the usual orthogonality relation on $ \mathbb{H}. $ $ \mathbb{L}(\mathbb{H}) $ ($ \mathbb{K}(\mathbb{H}) $) denotes the Banach space of all bounded (compact) linear operators on $ \mathbb{H}, $ endowed with the usual operator norm. Given $T \in \mathbb{L}(\mathbb{H}),$ let $M_T$ denote the norm attainment set of $T$,i.e, $ M_T=\left\lbrace x \in H ~ :~ \|x\|_2=1  , \|Tx\|_2=\|T\|\right\rbrace .$  We note that $M_T \neq \phi$ whenever $T \in \mathbb{K}(\mathbb{H}).$ In case $ \mathbb{H} $ is finite-dimensional, given any $ T \in \mathbb{L}(\mathbb{H}), $ we identify $ T $ with its matrix representation with respect to the canonical basis of $ \mathbb{H}. $ Let $ \mathcal{M}_n $ denote the space of all $ n \times n$ real  matrices and let $ \mathcal{D}_n $ be the subspace of $ \mathcal{M}_n, $ consisting of diagonal matrices. Given $T \in \mathcal{M}_n$, let $T^t$ denotes the transpose of $T.$  Given any $ A \in \mathcal{D}_n $ with diagonal entries $ a_{ii},~ 1 \leq i \leq n, $ we write $ A = ((a_{11}, a_{22}, \ldots, a_{nn})), $ for the sake of brevity. The zero element of $\mathbb{R}^n$ is denoted by $\theta, $ whenever $ n > 1. $ The following definition is of fundamental importance in our entire study:

\begin{definition}
	Let $ (\mathbb{X}, \|.\|) $ be a Banach space and let $ \mathbb{Y} $ be a subspace of $ \mathbb{X}. $ Given any $ x \in \mathbb{X}, $ we say that $ y_0 \in \mathbb{Y} $ is a best coapproximation to $ x $ out of $ \mathbb{{Y}} $ if $ \| y_0 - y \| \leq \| x - y \| $ for all $ y \in \mathbb{Y}. $
\end{definition}

In general, neither the existence nor the uniqueness of best coapproximation(s) is guaranteed, even in the finite-dimensional case. A subspace $ \mathbb{Y} $ of the Banach space $ \mathbb{X} $ is said to be coproximinal if a best coapproximation to any element of $ \mathbb{X} $ out of $ \mathbb{Y} $ exists. A coproximinal subspace $ \mathbb{Y} $ is said to be co-Chebyshev if the best coapproximation is unique in each case. Given $x \in \mathbb{X}$ and a subspace $\mathbb{{Y}}$ of $ \mathbb{X}, $ the (possibly empty) set of all best coapproximations to $x$ out of $\mathbb{{Y}}$ is denoted by $\mathcal{R}_\mathbb{{Y}}(x).$ Our aim in this article is to explore the problem of finding the best coapproximation(s) to any given $ T \in \mathcal{M}_n $ out of any given subspace $ \mathbb{Y} $ of $ \mathcal{D}_n, $ provided the best coapproximation(s) exist. We employ Birkhoff-James orthogonality techniques and the concept of numerical range of an operator $ T \in \mathbb{L}(\mathbb{H}), $ to obtain a complete solution to the above problem, which is also computationally effective. Let us recall from the pioneering articles \cite{B, J} that given any two elements $ x, y $ in a Banach space $ \mathbb{X}, $ we say that $ x $ is Birkhoff-James orthogonal to $ y, $ written as $ x \perp_B y, $ if $ \| x+\lambda y \| \geq \| x \| $ for all $ \lambda \in \mathbb{R}. $ It should be noted that given any subspace $ \mathbb{Y} $ of a Banach space $ \mathbb{X} $ and an element $ x \in \mathbb{X}, $ $ y_0 \in \mathbb{Y} $ is a best coapproximation to $ x $ out of $ \mathbb{Y} $ if and only if $ \mathbb{Y} \perp_B (x-y_0,) $ i.e., $ y \perp_B (x-y_0) $ for all $ y \in \mathbb{Y}. $ Using Theorem $ 1.1 $ of \cite{BS}, also known as the Bhatia-$\breve{S}$emrl Theorem, we study the best coapproximation problem from the perspective of Birkhoff-James orthogonality. We also recall that given any $ T \in \mathbb{L}(\mathbb{H}), $ the numerical range of $ T $ is defined as $ W(T) := \{ \langle Tx, x \rangle : \| x \|_2 = 1 \}. $ We refer the readers to \cite{GR}, for a comprehensive study and possible applications of the numerical range of an operator in $ \mathbb{L}(\mathbb{H}). \\$

In order to apply the above concepts in our designated study, we need to introduce the following definitions whose importance will be self-evident in due course of time.

\begin{definition} \label{component}
	Let $ \mathcal{A}= \{A_1, A_2, \ldots, A_m\}  $ be a set of linearly independent elements in $ \mathcal{D}_n, $  where  $A_k= ((a^k_{11}, a^k_{22}, \ldots, a^k_{nn})),  $ for each $ 1 \leq k \leq m. $  Considering the diagonal matrices  $ A_1, A_2, \ldots, A_m $ as column vectors,   we form the $ n \times m $ matrix $ \widetilde{A}= (\widetilde{a_{ij}})_{ 1 \leq i \leq n, 1 \leq j \leq m},$ where $ \widetilde{a_{ij}}= a_{ii}^j.$ 
	\begin{enumerate}
		\item [(i)]  For each $ i \in \{1,2,\ldots,n\}, $ the $i$-th component  of  $\mathcal{A}$ is defined  as the $i$-th row of $\widetilde{A},$ i.e., 
		$\left( a_{ii} ^1 ,a_{ii}^2 ,\ldots,a_{ii} ^m \right). $ Whenever the context is clear we simply say the $i$-th component of $\mathcal{A}$ as the $i$-th component.
		\item  [(ii)]  The $i$-th component and the $j$-th component are said to  be equivalent if 
		$\left( a_{ii} ^1 ,a_{ii}^2 ,\ldots,a_{ii} ^m \right) $ = $\pm \left( a_{jj} ^1 ,a_{jj}^2 ,\ldots,a_{jj} ^m \right)  $.
		\item  [(iii)]  The positively associated set $\ P_i^+(\mathcal{A})$ of the $i$-th component is defined  as 
		\[ P_i^+(\mathcal{A}) = \Big\{ j \in \{1,2, \ldots, n\}: \left( a_{jj} ^1 ,a_{jj}^2 ,\ldots,a_{jj} ^m \right)  =  \left( a_{ii} ^1 ,a_{ii}^2 ,\ldots,a_{ii} ^m \right) \Big\}.\]
		Similarly, the negatively associated set $\ P_i^- (\mathcal{A})$ is defined as 
		\[ P_i^- (\mathcal{A})= \Big\{ j \in \{1,2, \ldots, n\}: \left( a_{jj} ^1 ,a_{jj}^2 ,\ldots,a_{jj} ^m \right)  =  -\left( a_{ii} ^1 ,a_{ii}^2 ,\ldots,a_{ii} ^m \right) \Big\}.\]
		For simplicity, we write $ P_i^+(\mathcal{A})= P_i^+$ and $ P_i^-(\mathcal{A})= P_i^-,  $ if the context is clear.
		\item  [(iv)]  The $i$-th component is said to satisfy the $ * $-Property with respect to $\mathcal{A}$ if there exist 	$\beta_1, \beta_2, \ldots, \beta_m \in \mathbb{R} $ such that 
		\[  \Big| \sum_{k=1}^{m} \beta_ka_{ii}^k \Big| > \max     \Big\{ \Big| {\sum_{k=1}^{m} \beta_k a_{jj}^k} \Big|:1 \leq  j \leq n, ~  j \notin   P_i^+ \cup P_i^- \Big\} .\]
	\end{enumerate}

\end{definition}

\begin{definition}
	Let $ \mathcal{A}= \{ A_1 , A_2 ,\ldots, A_m \} $ be linearly independent in $  \mathcal{D}_n ,$  where  $A_k= ((a^k_{11}, a^k_{22}, \ldots, a^k_{nn}))  ,$ for each  $ 1 \leq k \leq m. $ 
	Suppose that the $i$-th component satisfies the $*$-Property   with $ |P_i^+ \cup P_i^- | = k_i.$
	 Given $T= (b_{pq} )  _{1 \leq p, q \leq n}$, we define the $*$-associated matrix of $T$ corresponding to the $i$-th component as a square matrix of order $ k_i,$ given by  $ ^iT_*=(c_{rs})_{1 \leq r,s \leq k_i},$ where 
	 \begin{eqnarray*}
	 c_{rs} & = & b_{rs}, ~  (r, s) \in P_i^+ \times \left(P_i^+ \cup P_i^-\right) \\
	                & = & - b_{rs},  (r, s) \in P_i^- \times \left(P_i^+ \cup P_i^-\right) 
	 \end{eqnarray*} 
\end{definition}

 In this paper, we obtain a complete characterization of the best coapproximation to an element of $ \mathcal{M}_n $ out of a  given subspace of $ \mathcal{D}_n.$   We  emphasize that our method is computationally convenient and it is possible to present a tractable algorithmic solution to the above problem by using it. We further illustrate this by presenting explicit numerical examples in support of our claim. The first step in this direction is to obtain a theoretical characterization of the best coapproximation problem in $ \mathbb{K}(\mathbb{H}). $ The second step is to explore some fundamental attributes of the newly introduced $ * $-Property in connection with the best coapproximation problem. In the final step, we assimilate the previously obtained results to present the desired algorithm to study the best coapproximation problem in any given subspace of $ \mathcal{D}_n. $ We also characterize the coproximinal subspaces and co-Chebyshev subspaces of $ \mathcal{D}_n $ in $ \mathcal{M}_n. $ As another important application of the present study, we observe that a particular case of our method gives a complete solution to the best coapproximation problem in $ \ell_{\infty}^m, $ for any given $ m \in \mathbb{N}. $

\section{Main Results}   
We begin with a theoretical characterization of best coapproximations in $ \mathbb{K}(\mathbb{H}), $ that will play a crucial role in the computational approach towards finding best coapproximation$\left(s\right)$ (provided it exists) in any given subspace of $ \mathcal{D}_{n}, $ as adopted in the present article.

\begin{theorem}\label{general}
	Let $ \mathbb{H} $ be a Hilbert space and let $ T, A_1 , A_2 , \ldots, A_m \in \mathbb{K}(\mathbb{H}). $ Given any $ \alpha_1,\alpha_2, \ldots,\alpha_m \in \mathbb{R} , $  $ \sum_{i=1}^{m} \alpha_i A_i $ is a best coapproximation to $ T $ out of $span \{  A_1 , A_2 ,$ $ \ldots, A_m \} $ if and only if given any $\beta_1,\beta_2, \ldots,\beta_m \in \mathbb{R}$, there exists $x \in M_{\sum_{i=1}^{m} \beta_iA_i}$ such that $\langle ~ \sum_{i=1}^{m} \beta_iA_ix, (T-\sum_{i=1}^{m} \alpha_iA_i)x ~\rangle =0. $
\end{theorem}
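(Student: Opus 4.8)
The plan is to reduce the coapproximation condition to a Birkhoff-James orthogonality condition, and then to dispose of the latter via the Bhatia-$\breve{S}$emrl Theorem applied one operator at a time.

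First I would invoke the equivalence recorded in the introduction: $y_0$ is a best coapproximation to $T$ out of a subspace $\mathbb{Y}$ if and only if $y \perp_B (T - y_0)$ for every $y \in \mathbb{Y}$. Taking $y_0 = \sum_{i=1}^{m} \alpha_i A_i$ and $\mathbb{Y} = span\{A_1, \ldots, A_m\}$, the assertion that $\sum_{i=1}^{m} \alpha_i A_i$ is a best coapproximation to $T$ is thus equivalent to $y \perp_B (T - \sum_{i=1}^{m} \alpha_i A_i)$ holding for all $y \in \mathbb{Y}$. Since every such $y$ has the form $\sum_{i=1}^{m} \beta_i A_i$ for some $\beta_1, \ldots, \beta_m \in \mathbb{R}$, this is in turn the same as demanding $\sum_{i=1}^{m} \beta_i A_i \perp_B (T - \sum_{i=1}^{m} \alpha_i A_i)$ for every tuple $\beta_1, \ldots, \beta_m \in \mathbb{R}$. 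This step merely unwinds the definitions and should present no difficulty.

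The crucial second step converts each such orthogonality relation into the stated inner-product condition. For a fixed tuple $(\beta_1, \ldots, \beta_m)$, both $P := \sum_{i=1}^{m} \beta_i A_i$ and $Q := T - \sum_{i=1}^{m} \alpha_i A_i$ lie in $\mathbb{K}(\mathbb{H})$, being finite linear combinations of compact operators. Applying the Bhatia-$\breve{S}$emrl Theorem to the pair $(P, Q)$, the relation $P \perp_B Q$ holds if and only if there is some $x \in M_P$ with $\langle Px, Qx \rangle = 0$; this is precisely the condition that there exists $x \in M_{\sum_{i=1}^{m} \beta_i A_i}$ with $\langle \sum_{i=1}^{m} \beta_i A_i x, (T - \sum_{i=1}^{m} \alpha_i A_i)x \rangle = 0$. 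Threading the universal quantifier over $(\beta_1, \ldots, \beta_m)$ through this equivalence in both directions then delivers the theorem.

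The one point needing care is that the Bhatia-$\breve{S}$emrl characterization is being used in the compact-operator setting, where it hinges on $M_P$ being non-empty. This is exactly guaranteed by compactness of $P$, as noted in the introduction ($M_T \neq \phi$ whenever $T \in \mathbb{K}(\mathbb{H})$). The degenerate possibility $P = 0$ also causes no trouble, since then $0 \perp_B Q$ holds automatically and $\langle Px, Qx \rangle = 0$ for every unit vector $x$, so both sides of the equivalence hold trivially.
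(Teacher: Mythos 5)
Your first step --- unwinding the definition of best coapproximation into the family of relations $\sum_{i=1}^{m}\beta_i A_i \perp_B \left(T-\sum_{i=1}^{m}\alpha_i A_i\right)$, one for each tuple $(\beta_1,\ldots,\beta_m)$ --- is exactly how the paper's proof begins, and that part is fine. The gap is in your second step. The Bhatia-$\breve{S}$emrl Theorem (Theorem $1.1$ of \cite{BS}) is a statement about $n\times n$ matrices, i.e., about operators on a \emph{finite-dimensional} space, whereas Theorem \ref{general} is stated for compact operators on an arbitrary Hilbert space $\mathbb{H}$. Your claim that the only point needing care in transferring it is the non-emptiness of $M_P$, ``exactly guaranteed by compactness,'' is false: non-emptiness of the norm attainment set does not suffice for the pointwise Bhatia-$\breve{S}$emrl characterization in infinite dimensions. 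Concretely, on $\ell_2$ take $P = \mathrm{diag}\left(1, \tfrac{1}{2}, \tfrac{2}{3}, \tfrac{3}{4}, \ldots\right)$ and $Q = \mathrm{diag}\left(1, -2^{-1/2}, -3^{-1/2}, -4^{-1/2}, \ldots\right)$. Then $M_P = \{\pm e_1\}$ is non-empty (indeed it is the unit sphere of a one-dimensional subspace), and $P \perp_B Q$: for $\lambda > 0$ the first diagonal entry of $P+\lambda Q$ has modulus $1+\lambda$, while for $\lambda < 0$ the $n$-th entry equals $1 - \tfrac{1}{n} + |\lambda| n^{-1/2} > 1$ as soon as $n > \lambda^{-2}$. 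Yet $\langle Pe_1, Qe_1\rangle = 1 \neq 0$, so no point of $M_P$ witnesses the orthogonality. (Here $P$ is not compact, and that is precisely the point: compactness must enter the argument through more than non-emptiness of $M_P$.)

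What compactness actually provides, and what your argument never uses, is structural: for compact $P \neq 0$, $M_P$ is the unit sphere of the finite-dimensional eigenspace $\mathbb{H}_0$ of $P^*P$ corresponding to the eigenvalue $\|P\|^2$, \emph{and} there is a spectral gap $\sup\{\|Px\|_2 : \|x\|_2 = 1,\ x \in \mathbb{H}_0^{\perp}\} < \|P\|$ --- it is the failure of this last condition that makes the example above possible. This is why the paper does not apply \cite{BS} directly in the proof: it first cites Theorem $2.2$ of \cite{SP} to identify $M_{\sum_{i=1}^{m}\beta_i A_i}$ as the unit sphere of a subspace of $\mathbb{H}$, and then invokes Theorem $2.2$ of \cite{SMP}, which is the Hilbert-space version of the Bhatia-$\breve{S}$emrl characterization whose hypotheses are met in this setting. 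To repair your proof you should either cite those results, as the paper does, or prove the compact-operator extension yourself (for instance, starting from the sequential characterization of $P \perp_B Q$ for bounded Hilbert-space operators, pick unit vectors $x_n$ with $\|Px_n\| \to \|P\|$ and $\langle Px_n, Qx_n\rangle \to 0$, pass to a weakly convergent subsequence, and use compactness of $P$ to upgrade to norm convergence, producing $x \in M_P$ with $\langle Px, Qx\rangle = 0$). In the finite-dimensional case $\mathbb{H} = \mathbb{R}^n$, which is all that the paper's later applications to $\mathcal{M}_n$ require, your argument is complete and coincides with the paper's; as the theorem is stated, for general $\mathbb{H}$, it has a genuine hole at its central step.
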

\begin{proof}
	It follows from the definitions of Birkhoff-James orthogonality and best coapproximation that $ \sum_{i=1}^{m} \alpha_iA_i $ is a best coapproximation to $ T $ out of $ span \{ A_1 , A_2 ,$ $ \ldots, A_m \} $ if and only if $ A \perp_B \left(T - \sum_{i=1}^{m} \alpha_iA_i\right), $ for all $ A \in span \left\lbrace  A_1, A_2, \ldots, A_m \right\rbrace. $ Clearly, this is equivalent to the following:
	
	\[ \sum_{i=1}^{m} \beta_iA_i \perp_B (T-\sum_{i=1}^{m} \alpha_iA_i) ~\forall ~\beta_1,\beta_2, \ldots,\beta_m \in \mathbb{R}. \]
	
	It follows from   Theorem $ 2.2 $ of  \cite{SP} that for any $\beta_1,\beta_2, \ldots,\beta_m \in \mathbb{R},$ $ M_{\sum_{i=1}^{m} \beta_iA_i} $ is the unit sphere of some subspace of $\mathbb{H}. $ Now applying Theorem $ 2.2 $ of \cite{SMP}, we conclude that the above condition is equivalent to the existence of $ x = x(\beta_1, \ldots, \beta_m) \in M_{\sum_{i=1}^{m} \beta_iA_i}  $ such that $ \langle ~ \sum_{i=1}^{m} \beta_iA_ix, (T-\sum_{i=1}^{m} \alpha_iA_i)x ~\rangle =0, $ for any $ \beta_1,\beta_2, \ldots,\beta_m \in \mathbb{R}. $ This completes the proof of the theorem.
	
	\end{proof}

We  establish some fundamental attributes of the newly introduced $*$-Property which also plays an important role in our scheme. To begin with, we establish the basis   invariance of equivalent components and the  $*$-Property.

\begin{prop}
	 Let $Y$ be a subspace of $ \mathcal{D}_n$ and let $\mathcal{A} = \{ A_1, A_2,\ldots, A_m \}, \mathcal{B}= \{ B_1, B_2,\ldots,$ $ B_m \}$ be two bases of $ Y,$
	where $A_k= ((a^k_{11}, a^k_{22}, \ldots, a^k_{nn}))  $and $ B_k = ((b^k_{11}, b^k_{22}, \ldots, b^k_{nn})) $, for each $ 1 \leq k \leq m .$  Then \\
	$ (i) $ $~P_i^+(\mathcal{A})= P_i^+(\mathcal{B}) ~\mbox{and}~ P_i^-(\mathcal{A})=P_i^-(\mathcal{B}), ~i\in \{1,2,\ldots,n\}.$ \\
	$(ii) $ For any $i \in \{1,2,\ldots,n \},$ the $i$-th component satisfies the $*$-Property  with respect to $\mathcal{A}$  if and only if the $i$-th component satisfies the $*$-Property with respect to  $\mathcal{B}.$
\end{prop}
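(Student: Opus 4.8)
The plan is to reduce both assertions to a single change-of-basis identity. Since $\mathcal{A}$ and $\mathcal{B}$ are both bases of the same subspace $Y$, there is an invertible matrix $C = (c_{kl}) \in \mathcal{M}_m$ with $B_l = \sum_{k=1}^m c_{kl} A_k$ for every $l$. Reading off the $i$-th diagonal entry of this identity gives $b_{ii}^l = \sum_{k=1}^m c_{kl} a_{ii}^k$ for all $i, l$, which in the notation of Definition \ref{component} is exactly the matrix identity $\widetilde{B} = \widetilde{A}\,C$. In particular the $i$-th component of $\mathcal{B}$ (the $i$-th row of $\widetilde{B}$) is obtained from the $i$-th component of $\mathcal{A}$ (the $i$-th row of $\widetilde{A}$) by right multiplication with the fixed invertible matrix $C$. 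This one identity, together with the invertibility of $C$, drives everything.

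For $(i)$, I would observe that if the $j$-th and $i$-th rows of $\widetilde{A}$ coincide then so do the $j$-th and $i$-th rows of $\widetilde{B} = \widetilde{A}C$, giving $P_i^+(\mathcal{A}) \subseteq P_i^+(\mathcal{B})$; the reverse inclusion follows by multiplying the equality of rows of $\widetilde{B} = \widetilde{A}C$ on the right by $C^{-1}$. The identical argument carried out with an extra sign yields $P_i^-(\mathcal{A}) = P_i^-(\mathcal{B})$. The only subtlety is that invertibility of $C$ is genuinely used for the reverse inclusions: one direction is automatic, but cancelling $C$ is not.

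For $(ii)$, the key observation is that the numbers occurring in the $*$-Property are precisely the coordinates of the vector $\widetilde{A}\,\vec{\beta}$: for $\vec{\beta} = (\beta_1,\ldots,\beta_m)^t \in \mathbb{R}^m$ one has $(\widetilde{A}\,\vec{\beta})_j = \sum_{k=1}^m \beta_k a_{jj}^k$. Thus the $i$-th component satisfies the $*$-Property with respect to $\mathcal{A}$ if and only if some vector in the set $\{\widetilde{A}\,\vec{\beta} : \vec{\beta} \in \mathbb{R}^m\}$ has its $i$-th coordinate strictly dominating, in absolute value, every coordinate indexed by $j \notin P_i^+ \cup P_i^-$. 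Now $\widetilde{B}\,\vec{\gamma} = \widetilde{A}(C\vec{\gamma})$, and since $\vec{\gamma} \mapsto C\vec{\gamma}$ is a bijection of $\mathbb{R}^m$, the two admissible sets coincide: $\{\widetilde{B}\,\vec{\gamma} : \vec{\gamma} \in \mathbb{R}^m\} = \{\widetilde{A}\,\vec{\beta} : \vec{\beta} \in \mathbb{R}^m\}$. Because $(i)$ guarantees that the index set $P_i^+ \cup P_i^-$, hence its complement, is the same for $\mathcal{A}$ and $\mathcal{B}$, the dominance condition holds for some vector of the common set independently of which basis we read it through, and this is exactly the asserted equivalence.

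I do not expect a serious obstacle here, as the whole argument is linear-algebraic and rests entirely on the invertibility of $C$. The main point requiring care is bookkeeping: correctly establishing $\widetilde{B} = \widetilde{A}\,C$ (with $C$ on the right, and not a transpose), and noticing that the \emph{strict} inequality of the $*$-Property is preserved because under the correspondence $\vec{\beta} = C\vec{\gamma}$ the relevant coordinate values are literally equal, not merely comparable. Once these are fixed, both parts follow at once.
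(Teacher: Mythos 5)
Your proposal is correct and follows essentially the same route as the paper: both introduce the change-of-basis matrix relating $\widetilde{A}$ and $\widetilde{B}$ (the paper writes $\widetilde{A}=\widetilde{B}Q$, you write $\widetilde{B}=\widetilde{A}C$ with $C=Q^{-1}$), deduce $(i)$ from invertibility, and prove $(ii)$ by observing that under $\vec{\gamma}=Q\vec{\beta}$ the coordinate values $\bigl|\sum_k \gamma_k b_{rr}^k\bigr| = \bigl|\sum_k \beta_k a_{rr}^k\bigr|$ are literally equal, so the strict dominance transfers. Your phrasing of this as equality of the achievable sets $\{\widetilde{A}\vec{\beta}\}=\{\widetilde{B}\vec{\gamma}\}$ is just a cleaner packaging of the paper's row-by-row computation, and your explicit appeal to $(i)$ for the invariance of the index set $P_i^+\cup P_i^-$ makes a step the paper leaves implicit.
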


\begin{proof}  (i) Consider the two matrices  $\widetilde{A}$ and $\widetilde{B}$ as constructed  in Definition \ref{component}. Since $\mathcal{A}$ and $ \mathcal{B}$ are two bases of $Y,$ so  there exists an invertible matrix $ Q= (q_{ij})_{1 \leq i, j \leq m}$ such that $ \widetilde{A}= \widetilde{B}Q,$ where $ \widetilde{a_{ij}}= \sum_{k=1}^{m} \widetilde{b_{ik}} q_{kj},$ for any $ 1 \leq i \leq n, 1 \leq j \leq m.$ The desired result then  follows easily.\\
  (ii) We first prove the necessary part.   As before let  $Q=(q_{ij})_{1 \leq i, j \leq m}$ be the invertible matrix such that $\widetilde{A}= \widetilde{B}Q,$ where $  \widetilde{a_{ij}}= \sum_{k=1}^{m} \widetilde{b_{ik}} q_{kj},$ for any $ 1 \leq i \leq n, 1 \leq j \leq m.$ Since the $i$-th component satisfies the $*$-Property with respect to $\mathcal{A},$ there exist $\beta_1,\beta_2, \ldots,\beta_m \in \mathbb{R} $ such that
 	  \[  | \sum_{k=1}^{m} \beta_ka_{ii}^k | > \max     \left\lbrace | {\sum_{k=1}^{m} \beta_k a_{jj}^k} |:1 \leq  j \leq n, ~  j \notin P_i^+ \cup P_i^- \right\rbrace. \]
 	 Observe that  $\widetilde{A} \widetilde{\beta}= \widetilde{B} Q\widetilde{\beta}, $ where $\widetilde{\beta}= ( \beta_1 ~ \beta_2 ~\ldots ~\beta_m)^t.$ Considering  $\widetilde{\gamma}= ( \gamma_1 ~ \gamma_2~ \ldots ~ \gamma_m)^t $ $ = Q\widetilde{\beta},$ 
 	  it is easy to see that for any $r \in \{ 1, 2, \ldots, n\},$ 
 	 \begin{eqnarray*}
 	 |\sum_{k=1}^{m} \gamma_kb_{rr}^k | =  |  \sum_{k=1}^{m} \Big(\sum_{j=1}^{m} q_{kj}\beta_j\Big) \widetilde{b_{rk}}| = | \sum_{j=1}^{m} ( \sum_{k=1}^{m} \widetilde{b_{rk}}  q_{kj} ) \beta_j | = | \sum_{j=1}^{m} \widetilde{a_{rj}} \beta_j | = | \sum_{j=1}^{m} \beta_j a_{rr}^j|. 
 	\end{eqnarray*}
 	  This immediately shows that the $i$-th component satisfies the $*$-Property with respect to $\mathcal{B}.$ This completes the necessary part.
   The sufficient part follows similarly.
\end{proof}

In light of the above theorem, from now onwards we will not explicitly mention the choice of basis in the description of the $ * $-Property. Our next theorem essentially guarantees the existence of the $ * $-Property.

\begin{theorem}\label{existence}
Let $ \mathcal{A}= \{ A_1 , A_2 ,\ldots, A_m \} $ be linearly independent in $  \mathcal{D}_n ,$  where  $A_k= ((a^k_{11}, a^k_{22}, \ldots, a^k_{nn}))  ,$ for each  $ 1 \leq k \leq m. $    Then there exists $ 1 \leq i \leq n $ such that the $i$-th component satisfies the $*$-Property.
\end{theorem}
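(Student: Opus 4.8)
The plan is to recast the $*$-Property in terms of the row vectors of $\widetilde{A}$ and then to exhibit a good choice of $\beta$ explicitly. For each $i$, write $v_i = (a_{ii}^1, a_{ii}^2, \ldots, a_{ii}^m)$ for the $i$-th component, viewed as a vector in $\mathbb{R}^m$ (equivalently, the $i$-th row of $\widetilde{A}$). In this language the equivalence of the $i$-th and $j$-th components means exactly $v_j = \pm v_i$, so $j \in P_i^+ \cup P_i^-$ precisely when $v_j = \pm v_i$. Writing $\beta = (\beta_1, \ldots, \beta_m)$ and using the Euclidean inner product of $\mathbb{R}^m$, the quantity $\sum_{k=1}^m \beta_k a_{ii}^k$ is just $\langle \beta, v_i \rangle$. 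Hence the $i$-th component satisfies the $*$-Property if and only if there is some $\beta \in \mathbb{R}^m$ with $|\langle \beta, v_i \rangle| > |\langle \beta, v_j \rangle|$ for every $j$ with $v_j \neq \pm v_i$.

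First I would choose the index of largest Euclidean norm: let $i_0 \in \{1, \ldots, n\}$ satisfy $\|v_{i_0}\|_2 = \max_{1 \leq i \leq n} \|v_i\|_2$. Since $\mathcal{A}$ is linearly independent, the $n \times m$ matrix $\widetilde{A}$ has rank $m \geq 1$, so it is not the zero matrix; consequently at least one row is nonzero and $v_{i_0} \neq 0$. The natural candidate for the separating functional is $\beta := v_{i_0}$ itself, that is, $\beta_k = a_{i_0 i_0}^k$. With this choice one has $\langle \beta, v_{i_0} \rangle = \|v_{i_0}\|_2^2 > 0$.

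It then remains to verify the strict inequality against every non-equivalent component. Fixing $j \notin P_{i_0}^+ \cup P_{i_0}^-$, so that $v_j \neq \pm v_{i_0}$, Cauchy--Schwarz together with the maximality of $\|v_{i_0}\|_2$ gives
\[ |\langle \beta, v_j \rangle| = |\langle v_{i_0}, v_j \rangle| \leq \|v_{i_0}\|_2 \, \|v_j\|_2 \leq \|v_{i_0}\|_2^2. \]
I expect the only delicate point, and hence the main obstacle, to be ruling out equality in this chain in the tie case $\|v_j\|_2 = \|v_{i_0}\|_2$; here one invokes the equality condition of Cauchy--Schwarz, since equality would force $v_j$ to be a scalar multiple of $v_{i_0}$, and combined with $\|v_j\|_2 = \|v_{i_0}\|_2 \neq 0$ this yields $v_j = \pm v_{i_0}$, contradicting $j \notin P_{i_0}^+ \cup P_{i_0}^-$. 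Thus the inequality is strict for every such $j$, so that $|\langle \beta, v_{i_0} \rangle| = \|v_{i_0}\|_2^2 > \max\{\, |\langle \beta, v_j \rangle| : 1 \leq j \leq n,\ j \notin P_{i_0}^+ \cup P_{i_0}^- \,\}$. This is precisely the $*$-Property for the $i_0$-th component, which completes the argument.
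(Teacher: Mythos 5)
Your proof is correct, and it takes a genuinely different route from the paper's. Writing $v_i$ for the $i$-th component viewed as a vector in $\mathbb{R}^m$, you give a direct construction: choose $i_0$ with $\|v_{i_0}\|_2$ maximal (nonzero, by linear independence), take $\beta = v_{i_0}$, and use Cauchy--Schwarz together with its equality case to obtain the strict inequality $|\langle v_{i_0}, v_j\rangle| < \|v_{i_0}\|_2^2$ for every $j$ with $v_j \neq \pm v_{i_0}$; you correctly isolate and dispose of the only delicate case, namely ties $\|v_j\|_2 = \|v_{i_0}\|_2$, where equality would force $v_j = \pm v_{i_0}$. The paper argues instead by perturbation: starting from an arbitrary $\widetilde{w} \in \mathbb{R}^m$, it considers the nonequivalent components maximizing $|\langle \widetilde{w}, v_i\rangle|$, and when this maximum is attained more than once it moves $\widetilde{w}$ within a small ball (where dominance over the non-maximizers persists by continuity) while avoiding the nowhere dense union of the two hyperplanes orthogonal to $v_{i_s}+v_{i_t}$ and $v_{i_s}-v_{i_t}$, so as to break the tie. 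Your argument is shorter, fully constructive (it exhibits the witness $\beta$ explicitly, which fits the computational spirit of the paper), and avoids the case analysis on the number of maximizers. What the paper's longer argument buys is the stronger statement recorded in Remark \ref{remark} and invoked in the sufficiency part of the proof of Theorem \ref{characterization}: for \emph{every} choice of scalars $\beta_1, \ldots, \beta_m$, the norm $\|\sum_{k=1}^{m}\beta_k A_k\|$ is attained at some component which itself satisfies the $*$-Property. Your fixed component $i_0$ need not attain the norm of a given combination, so your proof as written does not yield that remark; it can, however, be adapted --- among the maximizers for a given $\widetilde{w}$, pick one of maximal Euclidean norm and perturb $\widetilde{w}$ by a small positive multiple of it, whereupon your Cauchy--Schwarz estimate breaks the tie --- at which point your method recovers everything the paper's proof delivers.
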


\begin{proof}
	Let the $i_1$-th, $i_2$-th, \ldots,  $i_p$-th components represent all the nonequivalent components For any $ \widetilde{w} = \left( \gamma_1, \gamma_2, \ldots, \gamma_m\right)  \in \mathbb{R}^m,$ consider the set of scalars
	\[ S_{\widetilde{w}}:= \left\lbrace  |\sum_{k=1}^{m}\gamma_ka_{i_1i_1}^k|,|\sum_{k=1}^{m}\gamma_ka_{i_2i_2}^k|, \ldots, |\sum_{k=1}^{m}\gamma_ka_{i_pi_p}^k| \right\rbrace .	\]
	
 	$ Case~ 1:$ If $S_{\widetilde{w}} $ attains its maximum at a unique point, say at $|\sum_{k=1}^{m}\gamma_ka_{i_ri_r}^k|,$ where $r \in \{1, 2, \ldots, p\},$  then clearly the $i_r$-th component satisfies the $*$-Property. \\

$Case~ 2:$ Let us assume that the maximum of $S_{\widetilde{w}}$ is attained at  exactly two points. Suppose that for  $i_s, i_t \in \{i_1, i_2, \ldots, i_p\} $ and $ i_s \neq i_t,$ 
		 \[ | \sum_{k=1}^{m} \gamma_ka_{i_si_s}^k |= | \sum_{k=1}^{m} \gamma_ka_{i_ti_t}^k |> max     \left\lbrace | {\sum_{k=1}^{m} \gamma_k a_{qq}^k} |: q \in \{i_1, i_2, \ldots, i_p\} \setminus \{i_s, i_t\}\right\rbrace .\]
		 
		 Let us define functions $ f_s,f_t : \mathbb{R}^m \longrightarrow \mathbb{R} $ given by
		  \[ f_s(\widetilde{u}) := | \langle~\widetilde{u}, \widetilde{a}_{i_s}~\rangle |~\textit{and}~ f_t(\widetilde{u}) := | \langle~\widetilde{u}, \widetilde{a}_{i_t}~\rangle |, \]	
		 where $ \widetilde{u}=\left(\alpha_1, \alpha_2,\ldots, \alpha_m \right) \in \mathbb{R}^m  $ and $\widetilde{a}_{i_s}= (a_{i_si_s}^1, a_{i_si_s}^2, \ldots, a_{i_si_s}^m), \widetilde{a}_{i_t} =  (a_{i_ti_t}^1, a_{i_ti_t}^2,$ $ \ldots, a_{i_ti_t}^m)$ are  the $i_s$-th and  the $i_t$-th component, respectively.\\
		 
		 Let us also define another function, $ g : \mathbb{R}^m \longrightarrow \mathbb{R} $ given by 
		 
		 \[ g(\widetilde{u}) := max \left\lbrace  | \langle ~\widetilde{u}, \widetilde{a}_q ~\rangle  |: q \in \{i_1, i_2, \ldots, i_p\} \setminus \{i_s, i_t\}\right\rbrace, \]
		 \\
		 where $\widetilde{a}_q = (a_{qq}^1, a_{qq}^2, \ldots, a_{qq}^m)$ is   the $q$-th component.  Since $f_i, g $ are continuous function on $\mathbb{R}^m,$  $ \phi_i = f_i - g $ is also continuous and $ \phi_i\left( \widetilde{w} \right) > 0 ,$ for all $i \in \left\lbrace s, t\right\rbrace ,$ where $ \widetilde{w} = \left( \gamma_1,\gamma_2, \ldots , \gamma_m \right) \in \mathbb{R}^m. $ It is easy to observe that there exists an open ball $ \mathcal{B}_\delta\left( \widetilde{w} \right),  $ with radius $ \delta > 0 $ and centered at $ \widetilde{w}, $ such that $ \phi_i \left( \widetilde{y} \right) > 0, $ for all $\widetilde{y} \in   \mathcal{B}_\delta\left( \widetilde{w}  \right).$ Consider the hyperspaces $ H_1, H_2 $ of $ \mathbb{R}^m $ given by 
		 	\[ H_1=\left\lbrace \widetilde{x} \in \mathbb{R}^m : \langle~ \widetilde{x} , \left( \widetilde{a}_{i_s} + \widetilde{a}_{i_t} \right)\rangle =0 \right\rbrace,  
		 	\] 
		 	\[ H_2 =\left\lbrace \widetilde{x} \in \mathbb{R}^m : \langle~ \widetilde{x} , \left( \widetilde{a}_{i_s} - \widetilde{a}_{i_t} \right) \rangle =0 \right\rbrace.   
		 	\]
		 		We note that $ \left\lbrace  \widetilde{x} \in \mathbb{R}^m : f_s \left( \widetilde{x}\right)  = f_t\left( \widetilde{x}\right) \right\rbrace \ = H_1 \cup H_2 ,$
		 	which is a nowhere dense set  in $\mathbb{R}^m .$ Therefore, by choosing $ \widetilde{v}:=(\beta_1, \beta_2, \ldots, \beta_m) \in \mathcal{B}_\delta\left( \widetilde{w}  \right) \setminus (H_1 \cup H_2), $ we obtain that $ f_s\left( \widetilde{v} \right) \neq f_t\left( \widetilde{v} \right). $  Without loss of generality, assume   $ f_s\left( \widetilde{v} \right) > f_t\left( \widetilde{v} \right). $  It is now easy to observe that
		 	\[   | \sum_{k=1}^{m} \beta_ka_{i_si_s}^k | > max     \left\lbrace | {\sum_{k=1}^{m} \beta_k a_{qq}^k} |:1 \leq  q \leq n, ~  q \notin   P_{i_s}^+ \cup P_{i_s}^- \right\rbrace.
		 	\] 
		 	Therefore, the $i_s$-th component satisfies the $*$-Property. \\
		 	
		 	$Case ~ 3:$ Suppose that the maximum of $S_{\widetilde{w}}$ is attained at $r(>2)$ number of points and let the $i_1$-th,$ i_2$-th, \ldots, $i_r$-th components  satisfy 
		 	\[ | \sum_{k=1}^{m} \gamma_ka_{i_1i_1}^k |=\ldots = | \sum_{k=1}^{m} \gamma_ka_{i_ri_r}^k |  > max     \left\lbrace | {\sum_{k=1}^{m} \gamma_k a_{qq}^k} |: q \in \{i_1,\ldots, i_p\} \setminus \{i_1,\ldots, i_r\} \right\rbrace. \]  By similar argument as given in Case $2$,   it can be shown that at least one of the $i_l$-th components satisfies the $*$-Property, where $l \in \left\lbrace 1, 2, \ldots, r\right\rbrace .$ This completes the theorem.
\end{proof}

\begin{remark}\label{remark}
	Let $ \mathcal{A}= \{ A_1 , A_2 ,\ldots, A_m \} $ be linearly independent in $  \mathcal{D}_n ,$  where  $A_k= ((a^k_{11}, a^k_{22}, \ldots, a^k_{nn}))  ,$ for each  $ 1 \leq k \leq m. $   In particular, for any $ \beta_1, \beta_2, \ldots, \beta_m \in \mathbb{R},$ there exists an $i$-th component such that $ || \sum_{k=1}^{m}\beta_kA_k || = | \sum_{k=1}^{m}\beta_ka_{ii}^k|,$ where the $i$-th component satisfies the $*$-Property.
\end{remark}

Our next aim is to obtain a tractable necessary and sufficient condition for the $*$-Property. In this context, we first recall the definition of a normal cone. A non-empty set $K \subset \mathbb{R}^n$ is said to be a normal cone if the following three conditions are satisfied:
\[ (i) u,v \in K \Rightarrow u+v \in K, (ii)  u \in K,~ \alpha \geq 0 \Rightarrow \alpha u \in K, (iii) K \cap (-K) = \{\theta\}.\]  
We define interior  of the normal cone $K$, denoted by $ int(K),$ as the collection of all interior points of the normal cone $K$. We refer the readers to \cite{SPM}, for an application of the notion of normal cones in studying approximate Birkhoff-James orthogonality in Banach spaces. We also require the following lemma for our purpose.

\begin{lemma}
Let $ \mathcal{A}= \{ A_1 , A_2 ,\ldots, A_m \} $ be linearly independent in $  \mathcal{D}_n ,$  where  $A_k= ((a^k_{11}, a^k_{22}, \ldots, a^k_{nn}))  ,$ for each  $ 1 \leq k \leq m. $    Given any $i, j \in \{1, 2, \ldots,n\},$ where $j \notin P_i^+ \cup P_i^-,$ there exist a pair of normal cones whose interiors are the collection of all the $\left( \beta_1, \beta_2, \ldots, \beta_m\right) \in \mathbb{R}^m $  such that $| \sum_{k=1}^{m}\beta_ka_{ii}^k| > | \sum_{k=1}^{m}\beta_ka_{jj}^k|. $ 
\end{lemma}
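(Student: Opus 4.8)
The plan is to reduce the quadratic-looking inequality $\big|\sum_k \beta_k a_{ii}^k\big| > \big|\sum_k \beta_k a_{jj}^k\big|$ to a statement about the sign of a product of two linear functionals, and then read off the two cones directly. Writing $\widetilde{a}_i = (a_{ii}^1,\ldots,a_{ii}^m)$ and $\widetilde{a}_j = (a_{jj}^1,\ldots,a_{jj}^m)$ for the $i$-th and $j$-th components, and $\beta = (\beta_1,\ldots,\beta_m)$, I would set $\xi = \widetilde{a}_i + \widetilde{a}_j$ and $\eta = \widetilde{a}_i - \widetilde{a}_j$. Since for real numbers $|s| > |t|$ is equivalent to $s^2 - t^2 > 0$, the identity $\langle\beta,\widetilde{a}_i\rangle^2 - \langle\beta,\widetilde{a}_j\rangle^2 = \langle\beta,\xi\rangle\,\langle\beta,\eta\rangle$ shows that the required set is exactly $S = \{\beta \in \mathbb{R}^m : \langle\beta,\xi\rangle\langle\beta,\eta\rangle > 0\}$, i.e. the set where $\langle\beta,\xi\rangle$ and $\langle\beta,\eta\rangle$ share a common strict sign. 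Hence $S = C^+ \cup C^-$, where $C^+ = \{\beta : \langle\beta,\xi\rangle > 0,\ \langle\beta,\eta\rangle > 0\}$ and $C^- = -C^+ = \{\beta : \langle\beta,\xi\rangle < 0,\ \langle\beta,\eta\rangle < 0\}$, each an open convex cone.

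For the cones themselves I would take $K^+ = C^+ \cup \{\theta\}$ and $K^- = -K^+ = C^- \cup \{\theta\}$, and verify the three defining properties of a normal cone. Closure under addition and under multiplication by $\alpha \geq 0$ are immediate from the fact that $C^+$ is a convex cone (an intersection of two open half-spaces through the origin), together with the explicit inclusion of $\theta$ to cover $\alpha = 0$. Since $C^+$ is open, adjoining the single boundary point $\theta$ does not enlarge the interior, so $\operatorname{int}(K^+) = C^+$ and likewise $\operatorname{int}(K^-) = C^-$; thus $\operatorname{int}(K^+) \cup \operatorname{int}(K^-) = C^+ \cup C^- = S$, as desired.

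The one genuinely delicate point --- and the reason I would resist the temptation to use the \emph{closed} cones $\{\langle\beta,\xi\rangle \ge 0,\ \langle\beta,\eta\rangle \ge 0\}$ --- is the pointedness axiom $K \cap (-K) = \{\theta\}$. For the closed cone, $K \cap (-K)$ is the whole subspace $\{\xi,\eta\}^{\perp}$, which is nontrivial as soon as $m \ge 3$, so the closed cones are \emph{not} normal. With the open-cone-plus-apex choice this problem disappears: $C^+$ and $C^-$ are disjoint (one forces both functionals positive, the other both negative), so $K^+ \cap (-K^+) = (C^+ \cup \{\theta\}) \cap (C^- \cup \{\theta\}) = \{\theta\}$, giving pointedness for every $m$. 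Finally I would note that the hypothesis $j \notin P_i^+ \cup P_i^-$ is exactly what guarantees $\xi \neq \theta$ and $\eta \neq \theta$ (these vanish precisely when $\widetilde{a}_j = \mp\widetilde{a}_i$), and I would remark that the argument is unaffected in the degenerate case where $\widetilde{a}_i$ and $\widetilde{a}_j$ are linearly dependent: there $C^+$ may be empty or an open half-space, so that $S$ is empty or the complement of a hyperplane, and the construction still returns bona fide normal cones.
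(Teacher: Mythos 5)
Your proof is correct, and in fact your cones coincide \emph{as sets} with the ones the paper builds; the difference lies in how the normal-cone axioms get verified. The paper defines $C=\{\beta\in\mathbb{R}^m: |\sum_{k}\beta_k a_{ii}^k|>|\sum_k\beta_k a_{jj}^k|\}$ and splits it by the sign of $\sum_k\beta_k a_{ii}^k$, setting $K_1=\{\beta\in C:\sum_k\beta_k a_{ii}^k>0\}\cup\{\theta\}$ and $K_2=-K_1$; since $\langle\beta,\widetilde{a}_i\rangle>|\langle\beta,\widetilde{a}_j\rangle|$ is equivalent to $\langle\beta,\xi\rangle>0$ and $\langle\beta,\eta\rangle>0$, one has exactly $K_1\setminus\{\theta\}=C^+$ and $K_2\setminus\{\theta\}=C^-$ in your notation. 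The paper then proves additivity of $K_1$ by hand via the triangle inequality, $|\sum_k(\alpha_k+\gamma_k)a_{jj}^k|\leq|\sum_k\alpha_k a_{jj}^k|+|\sum_k\gamma_k a_{jj}^k|<|\sum_k\alpha_k a_{ii}^k|+|\sum_k\gamma_k a_{ii}^k|=|\sum_k(\alpha_k+\gamma_k)a_{ii}^k|$, where the last equality uses that both $i$-terms are positive, and it asserts $int(K_1)=K_1\setminus\{\theta\}$ as straightforward. Your difference-of-squares factorization instead exhibits each cone as an intersection of two open half-spaces through the origin (plus the apex), after which additivity, positive homogeneity, pointedness, and the computation of the interior are all immediate; you also make explicit two things the paper leaves tacit, namely that the hypothesis $j\notin P_i^+\cup P_i^-$ is precisely what forces $\xi\neq\theta$ and $\eta\neq\theta$ (which is what rules out $\theta$ being an interior point of $K^+$, since a punctured neighbourhood of $\theta$ cannot lie in a half-space), and that the closed-cone variant would violate the pointedness axiom once $m\geq 3$. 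What the paper's cruder triangle-inequality argument buys in exchange is flexibility: it works verbatim for sets of the form $\{\beta:|\langle\beta,\widetilde{a}_i\rangle|>\max_j|\langle\beta,\widetilde{a}_j\rangle|\}$ with several $j$'s compared simultaneously, where no two-factor factorization is available, whereas your argument is tailored to comparing a single pair of linear functionals.
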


\begin{proof}
	Let us consider the set 
	\[ C := \left\lbrace \left( \beta_1, \beta_2, \ldots, \beta_m\right) \in \mathbb{R}^m : ~| \sum_{k=1}^{m}\beta_ka_{ii}^k| > | \sum_{k=1}^{m}\beta_ka_{jj}^k| \right\rbrace .
	\]

	Let us also construct two sets $K_1$ and $K_2$ such that 
	\[ K_1 := \left\lbrace \left( \beta_1, \beta_2, \ldots, \beta_m\right) \in C : ~ \sum_{k=1}^{m}\beta_ka_{ii}^k > 0 \right\rbrace \bigcup ~\{\theta \}, \]
  \[  K_2 := \left\lbrace \left( \beta_1, \beta_2, \ldots, \beta_m\right) \in C:~ \sum_{k=1}^{m}\beta_ka_{ii}^k < 0 \right\rbrace \bigcup ~\{\theta \}.
	\]
	From the definition of $K_1$ and $K_2,$ it is evident that $K_1 = -K_2.$ Now, it is immediate that $ \widetilde{x} \in K_1$ implies that $ \alpha\widetilde{x} \in K_1, $ for all $\alpha \geq 0.$ Therefore, to prove that $K_1$ is a normal cone, we only need to show $\widetilde{u},\widetilde{v} \in K_1$ implies $\widetilde{u} + \widetilde{v} \in K_1.$
	Suppose that $\widetilde{u}=(\alpha_1, \alpha_2,\ldots, \alpha_m)$ and $\widetilde{v}=(\gamma_1, \gamma_2, \ldots, \gamma_m) \in K_1.$ Then, $ \sum_{k=1}^{m}(\alpha_k + \gamma_k)a_{ii}^k >0$ and for any $ j \in \{1, 2, \ldots, n\} $ such that $j \notin P_i^+ \cup P_i^-,$ it follows that
	\begin{eqnarray*}
		 |\sum_{k=1}^{m}(\alpha_k + \gamma_k)a_{jj}^k  | \leq |\sum_{k=1}^{m}\alpha_ka_{jj}^k| + |\sum_{k=1}^{m} \gamma_ka_{jj}^k| & <& |\sum_{k=1}^{m}\alpha_ka_{ii}^k| + |\sum_{k=1}^{m} \gamma_ka_{ii}^k|\\ &= & |\sum_{k=1}^{m}(\alpha_k + \gamma_k)a_{ii}^k|.
	\end{eqnarray*}
This proves that $ K_1 $ (and therefore, $K_2$) is a normal cone. It is rather straightforward to verify that $int(K_1) = K_1 \setminus \{\theta\} $ and $int(K_2) = K_2 \setminus \{\theta\}. $ Therefore, $ int(K_1) \cup int(K_2) =C,$ as desired. This completes the proof of the lemma.
	\end{proof}

Next we introduce the notion of associated pair of cones, which turns out to be useful in characterizing the $*$-Property.

\begin{definition}
		Let $ \mathcal{A}= \{ A_1 , A_2 ,\ldots, A_m \} $ be linearly independent in $  \mathcal{D}_n ,$  where  $A_k= ((a^k_{11}, a^k_{22}, \ldots, a^k_{nn}))  ,$ for each  $ 1 \leq k \leq m. $  Given any $i \in \{1, 2, \ldots, n\},$ we define the  pair of normal cones $\  K_j^i,  -K_j^i\ $ as the associated pair of  cones of the $i$-th component with respect to the $j$-th component,  given by
		\[ K_j^i := \left\lbrace \left( \beta_1, \beta_2, \ldots, \beta_m\right) \in \mathbb{R}^m: ~ | \sum_{k=1}^{m}\beta_ka_{ii}^k| > | \sum_{k=1}^{m}\beta_ka_{jj}^k|~ \textit{and}~ \sum_{k=1}^{m}\beta_ka_{ii}^k > 0 \right\rbrace \cup \{\theta\}, \]
		 for all $j \notin P_i^+ \cup P_i^-.$ 
 \end{definition}
Finally, we are in a position to characterize the $*$-Property from a geometric perspective.
\begin{theorem}
		Let $ \mathcal{A}= \{ A_1 , A_2 ,\ldots, A_m \} $ be linearly independent in $  \mathcal{D}_n ,$  where  $A_k= ((a^k_{11}, a^k_{22}, \ldots, a^k_{nn}))  ,$ for each  $ 1 \leq k \leq m. $     Then for any $ i \in \{1, 2, \ldots, n\},$ the $i$-th component satisfies the $*$-Property if and only if~  \[ \bigcap\left\lbrace int(K_j^i ) \cup int(-K_j^i) : 1 \leq j \leq n, j \notin P_i^+ \cup P_i^-  \right\rbrace \neq \phi. \]
\end{theorem}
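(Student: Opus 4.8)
The plan is to reduce this theorem entirely to the preceding Lemma, the only genuine subtlety being that the strict inequality against a maximum appearing in the definition of the $*$-Property is, because the relevant index set is finite, equivalent to a family of simultaneous pairwise inequalities, one for each $j \notin P_i^+ \cup P_i^-$.

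First I would unpack the $*$-Property. By definition, the $i$-th component satisfies the $*$-Property precisely when there exists $\widetilde{\beta} = (\beta_1, \ldots, \beta_m) \in \mathbb{R}^m$ with
\[ \Big| \sum_{k=1}^m \beta_k a_{ii}^k \Big| > \max\Big\{ \Big| \sum_{k=1}^m \beta_k a_{jj}^k \Big| : 1 \leq j \leq n,\ j \notin P_i^+ \cup P_i^- \Big\}. \]
Since the maximum is taken over a finite set, for a fixed $\widetilde{\beta}$ this single inequality holds if and only if the same $\widetilde{\beta}$ satisfies $\big| \sum_k \beta_k a_{ii}^k \big| > \big| \sum_k \beta_k a_{jj}^k \big|$ simultaneously for every $j \notin P_i^+ \cup P_i^-$. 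The forward implication is immediate, as the left-hand side dominates each entry of the finite collection; the reverse implication is where finiteness is used, since the maximum of finitely many reals is attained and is therefore strictly dominated once each individual entry is. Consequently, the $*$-Property for the $i$-th component is equivalent to the existence of a single $\widetilde{\beta}$ lying in every one of the sets
\[ C_j := \Big\{ \widetilde{\beta} \in \mathbb{R}^m : \Big| \sum_k \beta_k a_{ii}^k \Big| > \Big| \sum_k \beta_k a_{jj}^k \Big| \Big\}, \quad j \notin P_i^+ \cup P_i^-. \]

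Next I would invoke the Lemma. For each fixed $j \notin P_i^+ \cup P_i^-$, the Lemma identifies $C_j$ with the union $int(K_1) \cup int(K_2)$ of the interiors of the two normal cones constructed there; in the notation of the associated pair of cones these are exactly $K_j^i$ and $-K_j^i$, so that $C_j = int(K_j^i) \cup int(-K_j^i)$. Combining this with the previous paragraph, the $*$-Property for the $i$-th component is equivalent to $\bigcap_j C_j \neq \phi$, which is precisely the stated condition $\bigcap \{ int(K_j^i) \cup int(-K_j^i) : 1 \leq j \leq n,\ j \notin P_i^+ \cup P_i^- \} \neq \phi$.

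I do not anticipate a deep obstacle, since the whole geometric content is already packaged in the Lemma. The one point requiring care is the finiteness argument that licenses passage between the ``maximum'' formulation of the $*$-Property and the conjunction of the pairwise inequalities governing the individual cones $C_j$; this is exactly what lets a single witness $\widetilde{\beta}$ in the intersection correspond to a witness for the $*$-Property and conversely.
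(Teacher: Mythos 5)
Your proposal is correct and takes essentially the same route as the paper's proof: unpack the $*$-Property, use finiteness of the index set to replace the strict inequality against the maximum by simultaneous pairwise strict inequalities, and identify each pairwise inequality set with $int(K_j^i) \cup int(-K_j^i)$ via the preceding Lemma and the definition of the associated pair of cones. The paper's own argument is just a terser version of this, with the finiteness step and the appeal to the Lemma left implicit.
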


\begin{proof}
	Suppose that the $i$-th component satisfies the $*$-Property, i.e., there exists  $\widetilde{x}=(\beta_1, \beta_1, \ldots, \beta_m) \in \mathbb{R}^m $ such that \[|\sum_{k=1}^{m} \beta_ka_{ii}^k| > max\left\lbrace |\sum_{k=1}^{m} \beta_ka_{jj}^k | : j \notin  P_i^+ \cup P_i^-\right\rbrace. \] 
	This is equivalent to   $\widetilde{x} \in int(K_j^i) \cup int(-K_j^i),$ for all $1 \leq j \leq n $ and $j \notin  P_i^+ \cup P_i^-, $ where $ K_j^i, -K_j^i$ are the pair of associated cones of the $i$-th component with respect to the $j$-th component. Therefore,
	 \[ \bigcap\left\lbrace int(K_j^i )\cup int(-K_j^i) : 1 \leq j \leq n, j \notin P_i^+ \cup P_i^-  \right\rbrace \neq \phi. 
	 \]
	  This completes the proof of the necessary part of the theorem. We note that the sufficient part of the theorem also follows from similar arguments and the definition of pair of associated cones. This establishes the theorem.
	
\end{proof}

We next obtain a simple and useful sufficient condition for the $ * $-Property. It should be noted that in practise, the following result can be readily applied in most cases of the computations involving the $*$-Property, since checking the linear independence of a given set of vectors is not complicated at all by virtue of the well-known method of row reduction of matrices.

\begin{prop}\label{proposition:prop1} Let $ \mathcal{A}= \{ A_1 , A_2 ,\ldots, A_m \} $ be linearly independent in $  \mathcal{D}_n ,$  where  $A_k= ((a^k_{11}, a^k_{22}, \ldots, a^k_{nn}))  ,$ for each  $ 1 \leq k \leq m. $ 
	Suppose that the  $i$-th component   $\left( a_{ii} ^1 ,a_{ii}^2 ,\ldots,a_{ii} ^m \right)  \notin span\{\left( a_{jj} ^1 ,a_{jj}^2 ,\ldots,a_{jj} ^m \right) :1 \leq  j \leq n, ~  j \notin   P_i^+$ $ \cup P_i^-\}, $  
	where $\left( a_{jj} ^1 ,a_{jj}^2 ,\ldots,a_{jj} ^m \right) $  is  the $j$-th component.	Then the $ i $-th component satisfies the $ * $-Property.
\end{prop}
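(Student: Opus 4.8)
The plan is to translate the statement into Euclidean geometry in $\mathbb{R}^m$ and then to produce the required coefficients $\beta_1, \ldots, \beta_m$ explicitly by an orthogonality argument. Write $\widetilde{a}_j = (a_{jj}^1, a_{jj}^2, \ldots, a_{jj}^m) \in \mathbb{R}^m$ for the $j$-th component, and for any $\widetilde{\beta} = (\beta_1, \ldots, \beta_m)$ note that $\sum_{k=1}^m \beta_k a_{jj}^k = \langle \widetilde{\beta}, \widetilde{a}_j \rangle$ with respect to the standard inner product on $\mathbb{R}^m$. Thus the $*$-Property for the $i$-th component is precisely the existence of a $\widetilde{\beta}$ with $|\langle \widetilde{\beta}, \widetilde{a}_i\rangle| > \max\{|\langle\widetilde{\beta}, \widetilde{a}_j\rangle| : 1 \le j \le n,\ j \notin P_i^+ \cup P_i^-\}$. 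Setting $V := \mathrm{span}\{\widetilde{a}_j : 1 \le j \le n,\ j \notin P_i^+ \cup P_i^-\}$, the hypothesis is exactly that $\widetilde{a}_i \notin V$.

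The key step is to exhibit a single $\widetilde{\beta}$ that annihilates every component indexed outside $P_i^+ \cup P_i^-$ while pairing nontrivially with $\widetilde{a}_i$. I would take $\widetilde{\beta} := \widetilde{a}_i - P_V \widetilde{a}_i$, where $P_V$ denotes the orthogonal projection of $\mathbb{R}^m$ onto $V$. Since $\widetilde{a}_i \notin V$, the residual $\widetilde{\beta}$ is nonzero, and by construction $\widetilde{\beta} \perp V$, so $\langle \widetilde{\beta}, \widetilde{a}_j\rangle = 0$ for every $j \notin P_i^+ \cup P_i^-$. Moreover $\langle \widetilde{\beta}, \widetilde{a}_i\rangle = \langle \widetilde{\beta}, \widetilde{\beta} + P_V \widetilde{a}_i\rangle = \|\widetilde{\beta}\|_2^2 > 0$, using that $\widetilde{\beta} \perp P_V \widetilde{a}_i$. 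Consequently
\[ \Big|\sum_{k=1}^m \beta_k a_{ii}^k\Big| = \|\widetilde{\beta}\|_2^2 > 0 = \max\Big\{\Big|\sum_{k=1}^m \beta_k a_{jj}^k\Big| : 1 \le j \le n,\ j \notin P_i^+ \cup P_i^-\Big\}, \]
which is the $*$-Property. Equivalently, one may avoid projections and argue abstractly: if no such $\widetilde{\beta}$ existed, then $V^\perp \subseteq \widetilde{a}_i^\perp$, whence $\widetilde{a}_i \in (V^\perp)^\perp = V$, contradicting the hypothesis.

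Two minor points deserve attention rather than posing genuine difficulty. First, when $\{j : 1 \le j \le n,\ j \notin P_i^+ \cup P_i^-\} = \emptyset$, we have $V = \{\theta\}$ and the hypothesis forces $\widetilde{a}_i \neq \theta$; the displayed maximum is then taken over the empty set, so the $*$-Property holds with any $\widetilde{\beta}$ for which $\langle\widetilde{\beta}, \widetilde{a}_i\rangle \neq 0$, and the construction above still supplies one. Second, one should record that $\widetilde{a}_i \neq \theta$ in all cases, which is automatic because $\theta$ lies in every span, so $\widetilde{a}_i \notin V$ already precludes $\widetilde{a}_i = \theta$. I do not anticipate a real obstacle here: the entire content is the elementary fact that a vector lying outside a subspace is detected by some functional vanishing on that subspace, and the only care required is the bookkeeping that this one $\widetilde{\beta}$ simultaneously kills all the finitely many offending components — which the projection handles in a single stroke.
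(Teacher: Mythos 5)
Your proof is correct and takes essentially the same route as the paper: both arguments produce a vector $\widetilde{\beta}$ orthogonal to $V=\mathrm{span}\{\widetilde{a}_j : j \notin P_i^+ \cup P_i^-\}$ but not orthogonal to $\widetilde{a}_i$, so that the maximum on the right-hand side of the $*$-Property is $0$ while the left-hand side is positive. The paper infers the existence of such a vector from the strict inclusion $Y_2^\perp \subsetneq Y_1^\perp$ of orthogonal complements (your parenthetical ``abstract'' argument is exactly this), whereas you exhibit it explicitly as the projection residual $\widetilde{a}_i - P_V\widetilde{a}_i$ — a cosmetic, not substantive, difference.
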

\begin{proof}
	Let $ Y_1= span \{\left( a_{jj} ^1 , a_{jj}^2 ,\ldots, a_{jj} ^m \right):1 \leq  j \leq n, ~  j \notin   P_i^+ \cup P_i^-\} $ and let $Y_2=  span \{\left( a_{jj} ^1 , a_{jj}^2 ,\ldots, a_{jj} ^m \right) :1 \leq  j \leq n \}.$ Clearly, $ Y_1 \subsetneq Y_2 = \mathbb{R}^m ,$ which implies that $ Y_2^\perp \subsetneq Y_1^\perp .$ Therefore, there exists $  \left( \gamma_1,\gamma_2,\ldots ,\gamma_m \right) \in Y_1^\perp \setminus Y_2^\perp$ such that  $ | \sum_{k=1}^{m} \gamma_ka_{ii}^k | > max     \left\lbrace | {\sum_{k=1}^{m} \gamma_k a_{jj}^k} |:1 \leq j \leq n, ~  j \notin   P_i^+ \cup P_i^- \right\rbrace = 0 $. In other words, the $i$-th component satisfies the $*$-Property, as desired.  
\end{proof}

\begin{remark}
	Suppose that $\mathcal{T}_i $ is the collection of all those $j$ such that the $j$-th component is  a scalar multiple of the $i$-th component, where $ i, j \in \{1, 2, \ldots, n\}$. Let us assume that the $i$-th component $ ( a_{ii} ^1 ,a_{ii}^2 ,\ldots,a_{ii} ^m ) =c_j  ( a_{jj} ^1 ,a_{jj}^2 ,\ldots, $ $ a_{jj} ^m  ),$ where $\left( a_{jj} ^1 ,a_{jj}^2 ,\ldots, a_{jj} ^m \right) $ is the $j$-th component and $ |c_j| \geq 1, $ for all $j \in \mathcal{T}_i .$ Also assume that  $( a_{ii} ^1 ,a_{ii}^2 ,\ldots,$ $ a_{ii} ^m )  \notin span\{ \left( a_{kk} ^1 ,a_{kk}^2 ,\ldots,a_{kk} ^m \right) :1 \leq  k \leq n, ~ $ $  k \notin \mathcal{T}_i \}  .$ Following similar argument from Proposition \ref{proposition:prop1}, the $i$-th component satisfies the $*$-Property. 
\end{remark}

We are now ready to present a computationally convenient characterization of the best coapproximation   to an  element of $ \mathcal{M}_n $ out of a given subspace of $ \mathcal{D}_n. $

\begin{theorem}\label{characterization}
	Let $ \mathcal{A}= \{ A_1 , A_2 ,\ldots, A_m \} $ be linearly independent in $  \mathcal{D}_n ,$  where  $A_k= ((a^k_{11}, a^k_{22}, \ldots, a^k_{nn}))  ,$ for each  $ 1 \leq k \leq m. $  
	Suppose that the $j_1$-th, $j_2$-th, $\ldots, j_r$-th nonequivalent components satisfy  the $*$-Property. Then given any $T \in \mathcal{M}_n,~  \sum_{k=1}^{m} \alpha_kA_k $ is a best coapproximation to $T$ out of $span \left\lbrace A_1, A_2,\ldots, A_m \right\rbrace $ if and only if  $\alpha_1, \alpha_2,\ldots, \alpha_m \in \mathbb{R}$ satisfy the following relations:
	\[ a_{{j_p}{j_p}}^1 \alpha_1 +  a_{{j_p}{j_p}}^2 \alpha_2 +\ldots + a_{{j_p}{j_p}}^m \alpha_m \in W \left( ^{j_p}T_*  \right), 
	\]
	for all $p \in \{1, 2,\ldots , r \} ,$ where $  W \left( ^{j_p}T_*  \right) $ is the numerical range of the   $*$-associated matrix of $T$ corresponding to the $j_p$-th component.
	\end{theorem}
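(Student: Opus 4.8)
The plan is to invoke Theorem \ref{general} and translate the orthogonality condition appearing there into the numerical range conditions in the statement, exploiting the special structure of the norm attainment sets of diagonal matrices. Throughout, write $D_\beta := \sum_{k=1}^m \beta_k A_k$, a diagonal matrix whose $i$-th diagonal entry is $d_i(\beta) := \sum_{k=1}^m \beta_k a_{ii}^k$, and set $S := T - \sum_{k=1}^m \alpha_k A_k$, with entries $s_{pq}$. Note that $S$ agrees with $T$ off the diagonal, while $s_{ii} = b_{ii} - \lambda_i$, where $\lambda_i := \sum_{k=1}^m \alpha_k a_{ii}^k$. By Theorem \ref{general}, $\sum_k \alpha_k A_k$ is a best coapproximation to $T$ if and only if for every $\beta \in \mathbb{R}^m$ there exists $x \in M_{D_\beta}$ with $\langle D_\beta x, Sx\rangle = 0$. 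The entire argument rests on understanding this last condition.

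First I would record the elementary fact that for a diagonal matrix $D_\beta$ one has $\|D_\beta\| = \max_i |d_i(\beta)|$, and that $M_{D_\beta}$ consists exactly of the unit vectors supported on the index set $\{\, i : |d_i(\beta)| = \|D_\beta\|\,\}$. The crucial computation is the quadratic form identity: for a unit vector $x$ supported on $P := P_i^+ \cup P_i^-$, using $d_p(\beta) = d_i(\beta)$ for $p \in P_i^+$ and $d_p(\beta) = -d_i(\beta)$ for $p \in P_i^-$, one gets
\[ \langle D_\beta x, Sx\rangle = \sum_{p,q \in P} d_p(\beta)\, s_{pq}\, x_p x_q = d_i(\beta)\, \langle\, {}^iS_*\, x,\, x\rangle, \]
where ${}^iS_*$ denotes the $*$-associated matrix of $S$ corresponding to the $i$-th component (here $x$ is reindexed as a unit vector in $\mathbb{R}^{k_i}$). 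Since $S$ and $T$ share off-diagonal entries, and since $a_{pp}^k = \pm a_{ii}^k$ on $P_i^{\pm}$ forces the diagonal correction to be the single constant $\lambda_i$, a short check yields ${}^iS_* = {}^iT_* - \lambda_i I$. Hence, for unit $x$,
\[ \langle D_\beta x, Sx\rangle = d_i(\beta)\big(\langle\, {}^iT_*\, x,\, x\rangle - \lambda_i\big). \]
Because $D_\beta \neq 0$, and so $d_i(\beta) \neq 0$, whenever the maximum is attained at $i$, the existence of a suitable $x$ supported on $P$ is equivalent to $\lambda_i \in W({}^iT_*)$, directly by the definition of the numerical range.

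It remains to handle the quantifier ``for all $\beta$'' and to see that only the components $j_1, \ldots, j_r$ matter. For the forward direction, I would fix $p$ and use that the $j_p$-th component satisfies the $*$-Property to produce a specific $\beta$ for which $|d_{j_p}(\beta)| > |d_q(\beta)|$ for all $q \notin P_{j_p}^+ \cup P_{j_p}^-$; then $M_{D_\beta}$ consists precisely of the unit vectors supported on $P_{j_p}^+ \cup P_{j_p}^-$, so Theorem \ref{general} furnishes such an $x$, and the displayed identity yields $\lambda_{j_p} \in W({}^{j_p}T_*)$. For the converse, given an arbitrary $\beta$, Remark \ref{remark} guarantees that $\|D_\beta\|$ is attained at some component satisfying the $*$-Property, hence at a component equivalent to some $j_p$; since every index of $P_{j_p}^+ \cup P_{j_p}^-$ then also attains the maximum, I may freely restrict $x$ to this support, and the hypothesis $\lambda_{j_p} \in W({}^{j_p}T_*)$ supplies a unit vector realizing $\langle D_\beta x, Sx\rangle = 0$. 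This verifies the orthogonality condition for every $\beta$ and closes the equivalence.

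The main obstacle I anticipate is the bookkeeping in the converse direction, namely arguing that checking the numerical range condition on one representative $j_p$ of each $*$-Property equivalence class suffices to meet the requirement of Theorem \ref{general} for \emph{every} $\beta$, including those $\beta$ for which the norm is attained simultaneously on several equivalence classes. The resolution is that Remark \ref{remark} always locates a $*$-Property component among the maximizers, and the freedom to restrict the support of $x$ to that single equivalence class decouples the problem into the independent numerical range conditions. The remaining delicate point is the verification of the identity ${}^iS_* = {}^iT_* - \lambda_i I$, which hinges on the sign bookkeeping built into the definition of the $*$-associated matrix together with the relation $a_{pp}^k = \pm a_{ii}^k$ on $P_i^{\pm}$.
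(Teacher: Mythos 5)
Your proposal is correct and follows essentially the same route as the paper's proof: both directions rest on Theorem \ref{general}, with the $*$-Property supplying the special $\beta$ (and hence the norm attainment set supported on $P_{j_p}^+\cup P_{j_p}^-$) in the forward direction, and Remark \ref{remark} locating a $*$-Property component among the maximizers in the converse. Your identity $^{i}S_* = {}^{i}T_* - \lambda_i I$ is just a cleaner packaging of the quadratic-form computation the paper carries out directly, so the two arguments coincide in substance.
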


\begin{proof}
	
	Let us first prove  the necessary part of the theorem. Assume that the $j_s$-th component satisfies the $*$-Property, where $s \in \left\lbrace 1, 2, \ldots, r \right\rbrace $. Then there exists $ \widetilde{v} = ( \beta_1, \beta_2, $ $ \ldots, \beta_m )  \in \mathbb{R}^m $ such that $ | \sum_{k=1}^{m} \beta_ka_{j_sj_s}^k | > max     \{ | {\sum_{k=1}^{m} \beta_k a_{qq}^k} |:1 \leq  q \leq n, ~  q \notin  $ $  P_{j_s}^+ \cup P_{j_s}^- \}  $.
	 Therefore, $ \| \sum_{k=1}^{m} \beta_kA_k\| = | \sum_{k=1}^{m} \beta_ka_{j_sj_s}^k | $ and the norm attainment set of $\sum_{k=1}^{m} \beta_k A_k $ is
	 \[ M_{\sum_{k=1}^{m} \beta_k A_k} = \left\lbrace x=(x_1, x_2,\ldots, x_n) \in \mathbb{R}^n~ :~ \| x \|_2= 1, x_h = 0 ~ \forall~ h \notin P_{j_s}^+ \cup P_{j_s}^-\right\rbrace.
	 \]
	Let $T =(b_{uv})_{1 \leq u,v \leq n} \in \mathcal{M}_n.$ Since $  \sum_{k=1}^{m} \alpha_kA_k $ is a best coapproximation to $T$ out of $span \{ A_1, A_2, $ $ \ldots, A_m \}, $ it follows from Theorem \ref{general} that $ \langle ~ \sum_{k=1}^{m} \beta_kA_kx,~ (T-\sum_{k=1}^{m} \alpha_kA_k)x ~\rangle = 0 ,$ i.e., $  \langle ~ \sum_{k=1}^{m} \beta_kA_kx,~ Tx \rangle = \langle ~\sum_{k=1}^{m} \beta_kA_kx,~\sum_{k=1}^{m} \alpha_kA_kx ~\rangle  ,$ for some $ x \in M_{\sum_{k=1}^{m} \beta_k A_k }. $  By a straight forward calculation, the previous equation can be expressed as 

	\begin{eqnarray*}
			a_{{j_s}{j_s}}^1 \alpha_1  +\ldots + a_{{j_s}{j_s}}^m \alpha_m &=& \sum_{u \in P_{j_s}^+,  v \in P_{j_s}^+\cup P_{j_s}^-}b_{uv}x_ux_v   	- \sum_{u \in P_{j_s}^-,  v \in P_{j_s}^+\cup P_{j_s}^-}b_{uv}x_ux_v \\ &\in & W \left( ^{j_s}T_*  \right).
	\end{eqnarray*}
Similarly, we can observe that for all $ p \in \left\lbrace 1, 2, \ldots, r \right\rbrace, $
	 \begin{equation}
	 \begin{aligned}
	  a_{{j_p}{j_p}}^1 \alpha_1 +  a_{{j_p}{j_p}}^2 \alpha_2 +\ldots + a_{{j_p}{j_p}}^m \alpha_m \in W \left( ^{j_p}T_*  \right),
	 \end{aligned}
	 \end{equation}
	 completing the proof of the necessary part.\\
	
	  We now prove the sufficient part of the theorem. For any $ \beta_1, \beta_2,\ldots, \beta_m \in \mathbb{R} ,$ not all zero, by virtue of Remark \ref{remark}, there exists   $t \in \left\lbrace 1, 2, \ldots, r\right\rbrace$ such that the $j_t$-th component satisfies $\| \sum_{k=1}^{m} \beta_kA_k\| = | \sum_{k=1}^{m} \beta_k a_{j_tj_t}^k | .$ Let $ P_{j_t}^+ = \left\lbrace j_t,j_{t_2},\ldots, j_{t_w} \right\rbrace $ and $ P_{j_t}^- = \left\lbrace j_{t_{w+1}}, j_{t_{w+2}},\ldots, j_{t_v}  \right\rbrace ,$ so that $| P_{j_t}^+ \cup P_{j_t}^- |= v. $ From the hypothesis, $ \alpha_1, \alpha_2, \ldots, \alpha_m \in \mathbb{R} $ satisfy the following relations:
	  	\[ a_{{j_t}{j_t}}^1 \alpha_1 +  a_{{j_t}{j_t}}^2 \alpha_2 +\ldots + a_{{j_t}{j_t}}^m \alpha_m \in W \left( ^{j_t}T_*  \right). 
	  \]
	    Therefore, there exists  $ y = \left( y_t, y_{t_2}, \ldots, y_{t_v} \right) \in \mathbb{R}^v $ with $ \|y\|_2 = 1 $ such that 
	  \begin{equation}\label{equation2}
	  \begin{aligned}
	   a_{{j_t}{j_t}}^1 \alpha_1 +  a_{{j_t}{j_t}}^2 \alpha_2 +\ldots + a_{{j_t}{j_t}}^m \alpha_m = \langle  ^{j_t}T_*y , ~y \rangle.
	  \end{aligned}
	  \end{equation}
	   Now by taking $ \widehat{y}=( \widetilde{y} _1, \widetilde{y}_2,\ldots,  \widetilde{y}_n) \in \mathbb{R}^n~$ such that $~  \widetilde{y}_h = 0 ~ \forall~ j_h \notin P_{j_t}^+ \cup P_{j_t}^- $ and $\widetilde{y}_h = y_h ~\forall~ j_h \in  P_{j_t}^+ \cup P_{j_t}^- $, it is easy to observe that $ \widehat{y} \in  M_{\sum_{k=1}^{m} \beta_k A_k}.$ By some easy calculations and by using the equation (\ref{equation2}), we conclude that 
	   \begin{equation*}
	   \begin{aligned}
        \langle \sum_{k=1}^{m} \beta_k A_k\widehat{y},~(T-\sum_{k=1}^{m} \alpha_kA_k)~\widehat{y} ~\rangle = (\sum_{k=1}^{m} \beta_ka_{j_tj_t}^k)\left[  \langle y,~ ^{j_t}T_*y \rangle - \sum_{k=1}^{m} \alpha_ka_{j_tj_t}^k\right]  = 0.
	   \end{aligned}
	   \end{equation*}
	   The sufficient part of the theorem now follows directly from Theorem \ref{general}. This establishes the theorem.
	   
	  \end{proof}

  \begin{remark}
    Suppose that $ A_1, A_2, \ldots, A_m \in \mathcal{M}_n$, where $ 1 \leq m \leq n , $ are such that $ A_iA_j^t,A_i^tA_j $ are symmetric, for all $i, j \in \left\lbrace  1, 2, \ldots, m \right\rbrace. $ Then	from Corollary $9$ of \cite{MM}, there exist orthogonal matrices $ P $ and $ Q$  such that $ P^tA_iQ = D_i $, where $ D_i \in \mathcal{D}_n$, for all $i \in \left\lbrace  1, 2, \ldots, m \right\rbrace $. Moreover, since $P$ and $ Q $ are orthogonal matrices, it is easy to see that $ \| \sum_{i=1}^{m} \beta_iA_i\| = \|\sum_{i=1}^{m} \beta_iD_i\|,  $ for all $  \beta_1, \beta_2,\ldots, \beta_m \in \mathbb{R}. $
    \end{remark}

The above remark allows us to present the following strengthened version of Theorem \ref{characterization}.

\begin{theorem}
	 Let $ \mathcal{A}= \{ A_1 , A_2 ,\ldots, A_m \} $ be linearly independent in $  \mathcal{M}_n $  such that  $ A_iA_j^t,A_i^tA_j $ are symmetric, for all $i, j \in \left\lbrace  1, 2, \ldots, m \right\rbrace .$  Let $ D_1, D_2, \ldots, D_m \in \mathcal{D}_n
	 $ be such that  $ D_i = P^tA_iQ, $ where $ D_i=(( d_{11}^i, d_{22}^i, \ldots, d_{nn}^i))$,  for all $i \in \left\lbrace  1, 2, \ldots, m \right\rbrace $ and $ P, Q \in \mathcal{M}_n $ are orthogonal matrices. Suppose that the $j_1$-th, $j_2$-th, $\ldots , j_r$-th nonequivalent components satisfy  the $*$-Property  (with respect to $span\{  D_1, D_2,\ldots, $ $ D_m\}  $). Then given any $T \in \mathcal{M}_n,~  \sum_{i=1}^{m} \alpha_iA_i $ is a best coapproximation to $T$ out of $span \left\lbrace A_1, A_2,\ldots, A_m \right\rbrace $ if and only if  $\alpha_1, \alpha_2,\ldots, \alpha_m \in \mathbb{R}$ satisfy the following relations:
	\[ d_{{j_p}{j_p}}^1 \alpha_1 +  d_{{j_p}{j_p}}^2 \alpha_2 +\ldots + d_{{j_p}{j_p}}^m \alpha_m \in W \left( ^{j_p} (P^tTQ)_*  \right), 
	\]
	for all $p \in \{1, 2,\ldots , r\} ,$ where $  W \left( ^{j_p} (P^tTQ)_*   \right) $ is the numerical range of the $*$-associated matrix of $P^tTQ$ corresponding to the $j_p$-th component. 
\end{theorem}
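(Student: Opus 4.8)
The plan is to reduce the problem entirely to the already-proven Theorem~\ref{characterization} by transporting the best coapproximation question from $\mathcal{M}_n$ to the setting of diagonal matrices via the orthogonal matrices $P$ and $Q$. The key observation, supplied by the preceding remark, is that left multiplication by $P^t$ and right multiplication by $Q$ is an isometry on $\mathcal{M}_n$: since $P$ and $Q$ are orthogonal, $\|P^t X Q\| = \|X\|$ for every $X \in \mathcal{M}_n$. The first step I would carry out is to make precise that this isometry intertwines the best coapproximation problems. Explicitly, I would show that $\sum_{i=1}^{m}\alpha_i A_i$ is a best coapproximation to $T$ out of $span\{A_1,\ldots,A_m\}$ if and only if $\sum_{i=1}^{m}\alpha_i D_i$ is a best coapproximation to $P^t T Q$ out of $span\{D_1,\ldots,D_m\}$.

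To verify this intertwining I would argue directly from the definition of best coapproximation together with Birkhoff--James orthogonality. Recall that $\sum_i \alpha_i A_i$ is a best coapproximation to $T$ if and only if $A \perp_B (T - \sum_i \alpha_i A_i)$ for every $A \in span\{A_1,\ldots,A_m\}$. Writing an arbitrary such $A$ as $\sum_i \beta_i A_i$, the orthogonality condition reads $\|\sum_i \beta_i A_i + \lambda(T - \sum_i \alpha_i A_i)\| \geq \|\sum_i \beta_i A_i\|$ for all $\lambda \in \mathbb{R}$. Applying the isometry $X \mapsto P^t X Q$ to both sides, using $P^t A_i Q = D_i$ and linearity, this is precisely the statement that $\sum_i \beta_i D_i \perp_B (P^t T Q - \sum_i \alpha_i D_i)$. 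Since $\beta_1,\ldots,\beta_m$ range over all of $\mathbb{R}^m$, this establishes the claimed equivalence of the two best coapproximation problems.

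Once the reduction is in place, the second and final step is simply to invoke Theorem~\ref{characterization} for the diagonal family $\{D_1,\ldots,D_m\}$ and the target matrix $P^t T Q$. By hypothesis the $j_1$-th, \ldots, $j_r$-th nonequivalent components satisfy the $*$-Property with respect to $span\{D_1,\ldots,D_m\}$, so Theorem~\ref{characterization} tells us that $\sum_i \alpha_i D_i$ is a best coapproximation to $P^t T Q$ if and only if
\[
 d_{{j_p}{j_p}}^1 \alpha_1 + d_{{j_p}{j_p}}^2 \alpha_2 + \ldots + d_{{j_p}{j_p}}^m \alpha_m \in W\left( {}^{j_p}(P^t T Q)_* \right)
\]
for all $p \in \{1,2,\ldots,r\}$, where ${}^{j_p}(P^t T Q)_*$ is the $*$-associated matrix of $P^t T Q$ corresponding to the $j_p$-th component. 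Combining this with the intertwining of the first step yields exactly the asserted characterization.

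I do not expect any genuine obstacle here, since the heavy analytic content already resides in Theorem~\ref{characterization}; the present statement is a change-of-variables corollary. The only point requiring care is the bookkeeping in the first step: one must be careful that the isometry $X \mapsto P^t X Q$ really does preserve Birkhoff--James orthogonality, which hinges on it being a genuine norm-preserving \emph{linear bijection} of $\mathcal{M}_n$ (orthogonality of $P,Q$ gives both), and that the image of $span\{A_1,\ldots,A_m\}$ is exactly $span\{D_1,\ldots,D_m\}$, which follows from $D_i = P^t A_i Q$ together with the linearity of the map. Apart from confirming these structural facts, the argument is a direct transport of Theorem~\ref{characterization} through the isometry.
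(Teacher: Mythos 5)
Your proposal is correct, and it follows the same overall architecture as the paper: establish that $\sum_{i}\alpha_i A_i$ is a best coapproximation to $T$ out of $span\{A_1,\ldots,A_m\}$ if and only if $\sum_{i}\alpha_i D_i$ is a best coapproximation to $P^tTQ$ out of $span\{D_1,\ldots,D_m\}$, and then invoke Theorem \ref{characterization}. The only substantive difference lies in how the intertwining is verified. The paper routes it through Theorem \ref{general}: it rewrites the best coapproximation condition as the existence, for every choice of $\beta_1,\ldots,\beta_m$, of $x \in M_{\sum_i \beta_i A_i}$ with $\langle (\sum_i \beta_i A_i)x, (T-\sum_i \alpha_i A_i)x\rangle = 0$, then substitutes $A_i = PD_iQ^t$ and performs the change of variables $y = Q^t x$, checking that $x \in M_{\sum_i\beta_iA_i}$ if and only if $y \in M_{\sum_i\beta_iD_i}$. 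You instead argue abstractly that $X \mapsto P^tXQ$ is a linear, surjective isometry of $\mathcal{M}_n$, hence preserves Birkhoff--James orthogonality (and indeed best coapproximations, which are defined purely in terms of norms), and maps $span\{A_1,\ldots,A_m\}$ onto $span\{D_1,\ldots,D_m\}$. Your route is slightly more elementary and more general --- it never touches norm-attainment sets or the Hilbert space inner product, and would work verbatim for any norm-preserving linear bijection --- while the paper's computation stays inside the machinery it has already built (Theorem \ref{general}), making the transported inner-product condition explicit, which is in keeping with its computational emphasis. Both arguments are complete and correct.
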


\begin{proof}
$ \sum_{i=1}^{m} \alpha_iA_i $ is a best coapproximation to $T$ out of $span \left\lbrace A_1, A_2,\ldots, A_m \right\rbrace $ if and only if given any $\beta_1, \beta_2, \ldots , \beta_m \in \mathbb{R} ,$ there exists $ x \in M_{\sum_{i=1}^{m}\beta_iA_i}$ such that \\ $ \langle~ \left(  \sum_{i=1}^{m} \beta_iA_i \right) x, \left( T-\sum_{i=1}^{m} \alpha_iA_i \right) x ~ \rangle = 0 ,$ i.e,
 
 \[ \langle  ~\left( P \sum_{i=1}^{m} \beta_iD_iQ^t \right) x, \left( T-P\sum_{i=1}^{m} \alpha_iD_iQ^t \right) x ~ \rangle = 0 . \] 
 
  So, for $y=Q^tx$, it is immediate that   $ \langle ~\left(  \sum_{i=1}^{m} \beta_iD_i \right) y, \left( P^tTQ-\sum_{i=1}^{m} \alpha_iD_i \right) y ~ \rangle = 0 .$ We also note that $ x \in M_{\sum_{i=1}^{m}\beta_iA_i} $ if and only if $ y= Q^tx \in M_{\sum_{i=1}^{m}\beta_iD_i}$. Therefore, $ \sum_{i=1}^{m} \alpha_iA_i $ is a best coapproximation to $T$ out of $span \left\lbrace A_1, A_2,\ldots, A_m \right\rbrace $ if and only if $ \sum_{i=1}^{m} \alpha_iD_i $ is a best coapproximation to $P^tTQ$ out of $span \left\lbrace D_1, D_2,\ldots, D_m \right\rbrace .$ Now the desired result follows directly from Theorem \ref{characterization}. This completes the proof of the theorem.
\end{proof}
 
 To illustrate the applicability of Theorem \ref{characterization} from a computational point of view, we next present a series of explicit numerical examples elaborating the different features of the best coapproximation problem, related to the existence and the uniqueness. In each case, an algorithmic approach is presented which further underlines the usefulness of the $ * $-Property in studying best coapproximation problems in subspaces of $ \mathcal{D}_n. $
 
 \begin{example}
 	Let $ A_1 = (( ~7, ~-5,~ 2,~ 6,~ -7,~ -5,~ 1~)),~ A_2 = ((~ 1,~ 3, ~4, ~3, ~-1,~ 3,$ $ ~ 2~)),~ A_3 = ((~ 3, ~-7, ~-4, ~5, ~-3, ~-7, ~-2~)) $ be linearly independent matrices in $ \mathcal{D}_7.$ Our aim is to find the best coapproximation(s) to any given $T$ out of $ \mathbb{{Y}} = span\left\lbrace A_1,A_2,A_3 \right\rbrace. $ In view of the Theorem \ref{characterization}, we proceed in the following steps.\\
 	
 	$ Step ~1:$ For $ i \in \left\lbrace 1, 2,\ldots, 7 \right\rbrace ,$ the $i$-th components are  respectively 
 	\[ (7, 1, 3), (-5, 3, -7), (2, 4, -4),(6, 3, 5), (-7, -1, -3), (-5,3,-7), (1,2,-2).\]
 	
 	$ Step ~2:$ $ P_1^+ = \left\lbrace 1\right\rbrace , P_1^- = \left\lbrace 5\right\rbrace ; ~ P_2^+ = \left\lbrace 2, 6\right\rbrace , P_2^- = \phi ; ~ P_3^+ = \left\lbrace 3\right\rbrace , P_3^- = \phi; ~ P_4^+ = \left\lbrace 4\right\rbrace , P_4^- = \phi;  ~P_5^+ = \left\lbrace 5\right\rbrace , P_5^- = \left\lbrace 1 \right\rbrace ; ~ P_6^+ = \left\lbrace 2, 6\right\rbrace , P_6^- = \phi; ~ P_7^+ = \left\lbrace 7\right\rbrace , P_7^- = \phi,  $  respectively, where $P_i^+$ and   $P_i^-$ are the positively associated set and the negatively associated set of the $i$-th component, respectively, for all $ i \in \left\lbrace 1, 2, \ldots ,7\right\rbrace.$\\
 	
 	$ Step ~3:$ Here the nonequivalent components satisfying the $*$-Property may be taken as the $1$-st component, the $2$-nd component, the $3$-rd component and the $4$-th component.\\
 	
 	$ Step ~4:$  In this final step, we consider a given $ T \in \mathcal{M}_7 $ and apply Theorem \ref{characterization} to obtain the best coapproximation to $ T $ out of $ \mathbb{{Y}}. $ In order to illustrate the various possibilities arising in the best coapproximation problem in $ \mathcal{D}_7, $ it suffices to consider the following three particular cases.\\
 	
 	$Case~1:$
 	Let $ T_1 \in \mathcal{M}_7 $ be given by  $T_1 = \left( b_{ij}\right)_{1 \leq i,j \leq 7}$  , where $b_{11} = 2, ~b_{15}=4,~ b_{22}=1,~ b_{26}= 3,~ b_{33}= 4,~ b_{44}= 1,~ b_{51}= -7,~ b_{55}= -2, ~b_{62}= 2, ~b_{66}= 1$ and the other $b_{ij} $'s can be chosen arbitrarily.\\

 	Therefore, $^1T_{1*} =\begin{pmatrix}
 	2 & 4\\
 	7 & 2
 	\end{pmatrix},~~ ^2T_{1*} = \begin{pmatrix}
 	1 & 3\\
 	2 & 1
 	\end{pmatrix}, ~ ^3T_{1*} = (4), ~ ^4T_{1*}= (1).$ \\
 	Then from Theorem \ref{characterization}, $\sum_{i=1}^{3}\alpha_iA_i $ is a best coapproximation to $T_1$ out of $ \mathbb{{Y}}$ if and only if $ \alpha_1, \alpha_2,\alpha_3 \in \mathbb{R}$ satisfies the following relations:
 	\begin{eqnarray*}
 	 7\alpha_1 + ~\alpha_2 + ~3\alpha_3 \in~ W\left( ^1T_{1*}\right)&=&[-7/2,~ 15/2] 
 	\\-5\alpha_1 +~ 3\alpha_2 - ~7\alpha_3 \in~ W\left( ^2T_{1*}\right)&=&[-3/2, ~7/2]
 \\	2\alpha_1 +~ 4\alpha_2 - ~4\alpha_3 \in~ W\left( ^3T_{1*}\right)&=& ~ \{4\}
 \\	6\alpha_1 +~ 3\alpha_2 - ~5\alpha_3 \in~ W\left( ^4T_{1*}\right)&=& \{1\}.
 	 \end{eqnarray*}
  Since there are infinitely many  $ \alpha_1, \alpha_2,\alpha_3 \in \mathbb{R}$ satisfying the above relations, there are infinitely many best coapproximation to $T_1$ out of $ \mathbb{{Y}}.$ Moreover,
   \[\ \mathcal{R}_\mathbb{{Y}}(T_1) =  \{((~x, ~4-x, ~4,~ 1,~ -x,~ 4-x~,2))~: ~ 1/2 \leq x \leq 11/2\}.\]
 $Case~2:$ 	Let $ T_2 \in \mathcal{M}_7 $ be given by  $T_1 = \left( c_{ij}\right)_{1 \leq i,j \leq 7}$  , where $~c_{11} = 3,~ c_{15}= -5, ~c_{22}=1, ~c_{26}= 3, ~c_{33}= 4, ~c_{44}= 1, ~c_{51}= -5, ~c_{55}= -3,~ c_{62}= 2,~ c_{66}= 1$ and the other $c_{ij}  $'s can be chosen arbitrarily.\\
  
  Therefore, $^1T_{2*} =\begin{pmatrix}
  3 & -5\\
  5 & 3
  \end{pmatrix},~~ ^2T_{2*} = \begin{pmatrix}
  1 & 3\\
  2 & 1
  \end{pmatrix}, ~ ^3T_{2*} = (4), ~ ^4T_{2*}= (1).$ \\
  Then from Theorem ~\ref{characterization},
  \begin{eqnarray*}
  7\alpha_1 + ~\alpha_2 + ~3\alpha_3 \in~ W\left( ^1T_{2*}\right)&=&~ \{3\} 
  \\-5\alpha_1 +~ 3\alpha_2 - ~7\alpha_3 \in~ W\left( ^2T_{2*}\right)&=&[-3/2, ~7/2]
  \\	2\alpha_1 +~ 4\alpha_2  ~-4\alpha_3 \in~ W\left( ^3T_{2*}\right)&=& ~\{4\}
  \\	6\alpha_1 +~ 3\alpha_2 - ~5\alpha_3 \in~ W\left( ^4T_{2*}\right)&=& \{1\}.
  \end{eqnarray*}
  Since there exist unique  $\alpha_1 ,\alpha_2 ,\alpha_3 \in \mathbb{R} $ satisfying the above relations, the best coapproximation to $T$ out of $ \mathbb{{Y}}  $ is unique.  Moreover,
   \[ \mathcal{R}_\mathbb{{Y}}(T_2) = \{((~3,~1,~4,~1,~-3,~1~,2))\}.\]  
   
   $Case~3:$ 	Let $ T_3 \in \mathcal{M}_7 $ be given by  $T_3 = \left( d_{ij}\right)_{1 \leq i,j \leq 7}$  , where $d_{11} = 14,~ d_{15}=1,~ d_{22}=1,~ d_{26}= 3,~ d_{33}= 4,~ d_{44}= 1,~ d_{51}= 1, ~d_{55}= -14,~ d_{62}= 2,~ d_{66}= 1$ and the other $d_{ij} $'s can be chosen arbitrarily.\\
  
  Therefore, $^1T_{3*} =\begin{pmatrix}
  14 & 1\\
  -1 & 14
  \end{pmatrix},~~ ^2T_{3*} = \begin{pmatrix}
  1 & 3\\
  2 & 1
  \end{pmatrix}, ~ ^3T_{3*} = (4), ~ ^4T_{3*}= (1).$ \\
  Then from Theorem ~\ref{characterization},
  \begin{eqnarray*}
    7\alpha_1 + ~\alpha_2 + ~3\alpha_3 \in~ W\left( ^1T_{3*}\right)&=&~\{14\}
  \\-5\alpha_1 +~ 3\alpha_2 - ~7\alpha_3 \in~ W\left( ^2T_{3*}\right)&=&[-3/2, ~7/2]
  \\	2\alpha_1 +~ 4\alpha_2 - ~4\alpha_3 \in~ W\left( ^3T_{3*}\right)&=& ~\{4\}
  \\	6\alpha_1 +~ 3\alpha_2 - ~5\alpha_3 \in~ W\left( ^4T_{3*}\right)&= &\{1\}.
  \end{eqnarray*}
  Since there exists no such  $\alpha_1 ,\alpha_2 ,\alpha_3 \in \mathbb{R} $ satisfying the above relations, it follows that \[ \mathcal{R}_\mathbb{{Y}}(T_3) = \phi. \] 
  \end{example}

  Our next goal is to obtain a tractable characterization of the coproximinal subspaces of $ \mathcal{D}_n $ with respect to $ \mathcal{M}_n. $ The following lemma is crucial for that purpose, besides being interesting in its own right by providing a lower bound on the number of nonequivalent components satisfying the $ * $-Property. 
  
\begin{lemma}\label{lemmapgeqm}
	Let $ \mathcal{A}= \{ A_1 , A_2 ,\ldots, A_m \} $ be linearly independent in $  \mathcal{D}_n ,$  where  $A_k= ((a^k_{11}, a^k_{22}, \ldots, a^k_{nn}))  ,$ for each  $ 1 \leq k \leq m. $ 
 Let the total number of nonequivalent components  satisfying the $*$-Property be $p$. Then $ p \geq m. $ 
\end{lemma}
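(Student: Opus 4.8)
The plan is to prove the stronger statement that the nonequivalent components satisfying the $*$-Property already span $\mathbb{R}^m$; since $p$ counts exactly these components, $p \geq \dim \mathbb{R}^m = m$ follows at once. Writing $\widetilde{a}_i = (a_{ii}^1, a_{ii}^2, \ldots, a_{ii}^m)$ for the $i$-th component, I first observe that linear independence of $\{A_1, \ldots, A_m\}$ in $\mathcal{D}_n$ is exactly the statement that the columns of $\widetilde{A}$ are independent, so $\widetilde{A}$ has rank $m$ and its rows $\widetilde{a}_1, \ldots, \widetilde{a}_n$ span $\mathbb{R}^m$. As equivalent components differ only by a sign, the nonequivalent components span $\mathbb{R}^m$ as well.

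I would then argue by contradiction: assume the nonequivalent components satisfying the $*$-Property span a proper subspace $V \subsetneq \mathbb{R}^m$, and fix $w \in \mathbb{R}^m$ with $w \neq \theta$ and $w \perp V$. Set $M := \max\{ |\langle w, \widetilde{a}_j\rangle| : 1 \leq j \leq n \}$. Since the full collection of components spans $\mathbb{R}^m$, not all of them are orthogonal to $w$, so $M > 0$; on the other hand every $*$-satisfying component lies in $V$ and is therefore orthogonal to $w$, so its value $|\langle w, \widetilde{a}_j\rangle| = 0 < M$. Consequently the maximum $M$ is attained only at components that do \emph{not} satisfy the $*$-Property. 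Let $J$ be a set of pairwise nonequivalent representatives of the components attaining $M$.

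The decisive step is to perturb $w$ so as to single out one element of $J$ as a strict maximizer; this is the tie-breaking device already employed in the proof of Theorem \ref{existence}. For $\beta = w + \epsilon v$ with $\epsilon$ small, the strict gap $M - \max\{|\langle w, \widetilde{a}_j\rangle| : j \notin J\} > 0$ guarantees, by continuity, that the maximum of $|\langle \beta, \widetilde{a}_j\rangle|$ is still attained within $J$. For distinct nonequivalent $\widetilde{a}_s, \widetilde{a}_t$ the tie set $\{\beta : |\langle \beta, \widetilde{a}_s\rangle| = |\langle \beta, \widetilde{a}_t\rangle|\}$ is the union of two hyperplanes, hence nowhere dense, so choosing $v$ outside the finitely many such sets associated to pairs from $J$ makes the numbers $|\langle \beta, \widetilde{a}_j\rangle|$, $j \in J$, pairwise distinct. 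The index $s \in J$ where the largest value occurs then satisfies $|\langle \beta, \widetilde{a}_s\rangle| > |\langle \beta, \widetilde{a}_j\rangle|$ for every component $\widetilde{a}_j$ nonequivalent to $\widetilde{a}_s$ — for the indices in $J$ by the tie-breaking, and for those outside $J$ by the gap — so the $s$-th component satisfies the $*$-Property. This contradicts $s \in J$, and the contradiction shows that the $*$-satisfying nonequivalent components span $\mathbb{R}^m$, i.e.\ $p \geq m$.

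The only genuinely delicate point is the perturbation in the third paragraph, and I expect it to be the main obstacle; however it is an exact replica of the separation argument in the proof of Theorem \ref{existence} (the roles of $f_s, f_t$ and the hyperplanes $H_1, H_2$ are identical), so no new estimate is required beyond verifying that the maximizing set stays inside $J$ under small perturbations, which the strict gap above provides. Conceptually, the statement is the assertion that the facet normals of the symmetric bounded polytope $\{\beta \in \mathbb{R}^m : |\langle \beta, \widetilde{a}_j\rangle| \leq 1,\ 1 \leq j \leq n\}$ must span $\mathbb{R}^m$, since otherwise the polytope would be unbounded in a direction orthogonal to their span; and these facet normals are precisely the components satisfying the $*$-Property.
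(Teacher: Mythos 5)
Your proposal is correct and takes essentially the same route as the paper: the paper also argues by contradiction that if the $*$-satisfying components spanned a proper subspace $Y_1\subsetneq\mathbb{R}^m$ (which is what $p<m$ forces), one can pick $\gamma\in Y_1^{\perp}\setminus\{\theta\}$, note that every $*$-satisfying component evaluates to $0$ against $\gamma$ while some component does not, and then invoke the tie-breaking perturbation of Theorem \ref{existence} to produce a $*$-satisfying component nonequivalent to all the given ones --- a contradiction. The only differences are cosmetic: you state the slightly stronger conclusion (the $*$-satisfying components span $\mathbb{R}^m$) and spell out the hyperplane-avoidance perturbation that the paper compresses into the phrase ``Following Theorem \ref{existence}.''
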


\begin{proof}
		 Suppose that the $j_1$-th, $j_2$-th, $\ldots ,~j_p$-th nonequivalent components satisfy  the $*$-Property. Suppose on the contrary that $p < m.$  Let $ Y_1= span \{ ( a_{j_sj_s} ^1 ,a_{j_sj_s}^2, $ $\ldots,  a_{j_sj_s} ^m ): 1 \leq s \leq p \} $ and let $Y_2=  span \{\left( a_{ii} ^1 ,a_{ii}^2 ,\ldots,a_{ii} ^m \right)) :1 \leq  i \leq n\}.$ Clearly, $ Y_1 \subsetneq Y_2 = \mathbb{R}^m ,$  which implies that $ Y_2^\perp \subsetneq Y_1^\perp .$  Therefore, there exists $  \left( \gamma_1,\gamma_2,\ldots ,\gamma_m \right) \in Y_1^\perp \setminus Y_2^\perp$ such that $ | \sum_{k=1}^{m} \gamma_ka_{ii}^k | > max     \{ | {\sum_{k=1}^{m} \gamma_k a_{j_sj_s}^k} |:1 \leq s \leq $ $ p\} = 0 ,$ for some $i \notin \{ j_1, j_2, \ldots,$ $ j_p \} . $ Following Theorem \ref{existence}, we obtain that the $i$-th component, which is nonequivalent to the  $j_1$-th, $j_2$-th, $\ldots ,~j_p$-th components, satisfies the $*$-Property. This contradiction completes the proof of the lemma. 

\end{proof}

We now obtain a characterization of the coproximinal subspaces of $ \mathcal{D}_n $ in terms of the $ * $-Property.

\begin{theorem}\label{*property}
	Let $ \mathcal{A}= \{ A_1 , A_2 ,\ldots, A_m \} $ be linearly independent in $  \mathcal{D}_n ,$  where  $A_k= ((a^k_{11}, a^k_{22}, \ldots, a^k_{nn}))  ,$ for each  $ 1 \leq k \leq m. $  Then  $span\left\lbrace A_1 , A_2 ,\ldots, A_m\right\rbrace $ is a coproximinal subspace of $\mathcal{M}_n$ if and only if there exist exactly  $m$ number of nonequivalent  components satisfying the $*$-Property.
\end{theorem}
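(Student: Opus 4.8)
The plan is to reduce the coproximinality of $\mathbb{Y}=\operatorname{span}\{A_1,\ldots,A_m\}$ to the solvability, for \emph{every} $T\in\mathcal{M}_n$, of the system of linear inclusions furnished by Theorem \ref{characterization}; recall that by Theorem \ref{characterization} a best coapproximation to $T$ exists exactly when such a system admits a solution, and $\mathbb{Y}$ is coproximinal precisely when this happens for all $T$. Let $j_1,\ldots,j_p$ be the nonequivalent components satisfying the $*$-Property, write $\widetilde{a}_{j_s}=(a_{j_sj_s}^1,\ldots,a_{j_sj_s}^m)$ for the associated components, and let $B$ be the $p\times m$ matrix whose $s$-th row is $\widetilde{a}_{j_s}$, so that the system of Theorem \ref{characterization} reads $B\widetilde{\alpha}\in\prod_{s=1}^{p}W({}^{j_s}T_*)$, where $\widetilde{\alpha}=(\alpha_1,\ldots,\alpha_m)^t$. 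By Lemma \ref{lemmapgeqm} we always have $p\geq m$, so ``exactly $m$'' is the same as $p=m$. Moreover, the argument used in the proof of Lemma \ref{lemmapgeqm} (via Theorem \ref{existence}) shows that the vectors $\widetilde{a}_{j_1},\ldots,\widetilde{a}_{j_p}$ necessarily span $\mathbb{R}^m$, since otherwise one could produce a further nonequivalent component satisfying the $*$-Property. Finally, each $W({}^{j_s}T_*)$ is a nonempty closed interval, being the numerical range of a real square matrix.

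For the sufficiency ($p=m\Rightarrow$ coproximinal) I would argue that the $m$ spanning vectors $\widetilde{a}_{j_1},\ldots,\widetilde{a}_{j_m}$ form a basis of $\mathbb{R}^m$: were they linearly dependent, the proof of Lemma \ref{lemmapgeqm} would again manufacture an additional $*$-Property component outside $\{j_1,\ldots,j_m\}$, contradicting $p=m$. Hence $B$ is an invertible $m\times m$ matrix and $\widetilde{\alpha}\mapsto B\widetilde{\alpha}$ is a bijection of $\mathbb{R}^m$. Given any $T$, I would pick $w_s\in W({}^{j_s}T_*)$ for each $s$ and solve $B\widetilde{\alpha}=(w_1,\ldots,w_m)^t$; the resulting $\sum_k\alpha_kA_k$ is, by Theorem \ref{characterization}, a best coapproximation to $T$. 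As $T$ is arbitrary, $\mathbb{Y}$ is coproximinal.

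For the necessity I would prove the contrapositive: if $p>m$, then $\mathbb{Y}$ is not coproximinal. Since $B$ has $p>m$ rows in $\mathbb{R}^m$, its rank is at most $m$, so the image of $\widetilde{\alpha}\mapsto B\widetilde{\alpha}$ is a proper subspace of $\mathbb{R}^p$; fix $w=(w_1,\ldots,w_p)^t$ outside this image. The crucial step is to realize this $w$ as the data of a single matrix $T$: because $j_1,\ldots,j_p$ are pairwise nonequivalent, the index sets $P_{j_s}^+\cup P_{j_s}^-$ are distinct equivalence classes and hence pairwise disjoint, so the blocks ${}^{j_s}T_*$ occupy disjoint entries of $T$ and may be prescribed independently; taking ${}^{j_s}T_*=w_s I_{k_{j_s}}$ yields $W({}^{j_s}T_*)=\{w_s\}$. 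For this $T$, Theorem \ref{characterization} shows $\sum_k\alpha_kA_k$ is a best coapproximation exactly when $B\widetilde{\alpha}=w$, which is unsolvable by the choice of $w$; thus $\mathcal{R}_{\mathbb{Y}}(T)=\phi$ and $\mathbb{Y}$ is not coproximinal. I expect this last realizability step to be the main obstacle, namely verifying that the numerical ranges of the distinct $*$-associated blocks can be prescribed simultaneously and independently; it rests precisely on the pairwise disjointness of the equivalence classes $P_{j_s}^+\cup P_{j_s}^-$ together with the fact that a scalar block has a singleton numerical range.
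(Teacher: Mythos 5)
Your proposal is correct and takes essentially the same route as the paper: reduction to the linear inclusions of Theorem \ref{characterization}, Lemma \ref{lemmapgeqm} to get $p \geq m$, invertibility of the $m \times m$ component matrix for sufficiency, and for necessity a matrix $T$ whose $*$-associated blocks have prescribed singleton numerical ranges. The only cosmetic differences are that the paper argues necessity directly (coproximinality forces the $p \times m$ component operator $T_D$ to be onto, hence $m \geq p$) rather than by contrapositive, and it merely asserts the simultaneous realizability of the singletons $W({}^{j_s}T_*) = \{\beta_s\}$, whereas you correctly pin this down via the pairwise disjointness of the classes $P_{j_s}^+ \cup P_{j_s}^-$.
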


\begin{proof}
	Let us first prove the sufficient part of the theorem. Let the $j_1$-th, $j_2$-th, $\ldots,  ~j_m$-th components be chosen as the nonequivalent $m $ number of components satisfying  the $*$-Property.  Let us consider $C \in \mathcal{M}_m $ given by $C=(c_{st})_{1\leq s,t \leq m}$ such that $c_{st} = a_{j_sj_s}^t,$ where $\left( a_{j_sj_s}^1, a_{j_sj_s}^2, \ldots,a_{j_sj_s}^m\right) $ is the $j_s$-th component.  We claim that $rank(C)=m.$ Suppose on the contrary $rank(C)<m$. Let $Y_1= span \{\left( a_{j_sj_s} ^1 ,a_{j_sj_s}^2 ,\ldots,a_{j_sj_s} ^m \right): 1 \leq s \leq m \}  $ and let $Y_2=  span \{\left( a_{ii} ^1 ,a_{ii}^2 ,\ldots,a_{ii} ^m \right) :1 \leq  i \leq n\}.$ Clearly, $ Y_1 \subsetneq Y_2 = \mathbb{R}^m ,$ which implies that $ Y_2^\perp \subsetneq Y_1^\perp .$  Therefore, there exists $  \left( \gamma_1,\gamma_2,\ldots ,\gamma_m \right) \in Y_1^\perp \setminus Y_2^\perp$ such that $ | \sum_{k=1}^{m} \gamma_ka_{ii}^k | > max     \{ | {\sum_{k=1}^{m} \gamma_k a_{j_sj_s}^k} |:1 \leq $ $ s \leq m  \} = 0 ,$ for some $i \notin \left\lbrace j_1, j_2, \ldots, j_m\right\rbrace . $ Following Theorem \ref{existence}, there exists an  $i$-th component, which is nonequivalent to the  $j_1$-th, $j_2$-th, $\ldots , j_m$-th components, that satisfies the $*$-Property. This contradiction establishes our claim.  Therefore, $C$ is invertible and hence onto. So, for any $\beta_1, \beta_2, \ldots, \beta_m \in \mathbb{R}, $ there always exist $\alpha_1, \alpha_2, \ldots, \alpha_m \in \mathbb{R}$ such that $	a_{{j_s}{j_s}}^1 \alpha_1 +  a_{{j_s}{j_s}}^2 \alpha_2 +\ldots + a_{{j_s}{j_s}}^m \alpha_m = \beta_i, $ for all $s \in \{1, 2, \ldots, m\}$. Noting that for any $T \in \mathcal{M}_n,~ W(^{j_s}T_*) \subset \mathbb{R}$, therefore we conclude that 
	  \begin{align}
	  a_{{j_s}{j_s}}^1 \alpha_1 +  a_{{j_s}{j_s}}^2 \alpha_2 +\ldots + a_{{j_s}{j_s}}^m \alpha_m \in W \left( ^{j_s}T_*  \right) ,
	  \end{align}
	  for all $ s \in \left\lbrace  1, 2, \ldots, m\right\rbrace .$ Following Theorem \ref{characterization}, it is now evident that $\sum_{k=1}^{m}\alpha_kA_k $ is the best coapproximation to $T$ out of  $span\left\lbrace A_1 , A_2 ,\ldots, A_m\right\rbrace $. 
	  This shows that $span\left\lbrace A_1 , A_2 ,\ldots, A_m\right\rbrace $ is a coproximinal subspace of $\mathcal{M}_n.$  \\
	
	Let us now prove the necessary part of the theorem. Suppose that the $j_1$-th, $j_2$-th, $\ldots,~ j_p$-th nonequivalent components satisfy  the $*$-Property. Then from Lemma \ref {lemmapgeqm}, we get that $p \geq m.$ Let us now take the $p \times m $ matrix $D=(d_{st})_{1\leq s \leq p,~ 1\leq t \leq m}$ such that $d_{st} = a_{j_sj_s}^t,$ where $ \left( a_{j_sj_s}^1,  a_{j_sj_s}^2, \ldots,  a_{j_sj_s}^m \right) $ is the $j_s$-th component. Let $T_D \in \mathbb{L}(\mathbb{H}_1,\mathbb{H}_2)$ be the linear operator corresponding to the matrix $D$ with respect to the standard ordered bases of $ \mathbb{H}_1, \mathbb{H}_2, $ where  $\mathbb{H}_1= \mathbb{R}^m $ and $ \mathbb{H}_2 = \mathbb{R}^p$.  Since  $span\left\lbrace A_1 , A_2 ,\ldots, A_m\right\rbrace $ is a coproximinal subspace of $\mathcal{M}_n,$ for  $T \in \mathcal{M}_n, $ there exist $ \alpha_1, \alpha_2,\ldots, \alpha_m \in \mathbb{R}$ satisfying the following relations:
		\begin{align}
	a_{{j_s}{j_s}}^1 \alpha_1 +  a_{{j_s}{j_s}}^2 \alpha_2 +\ldots + a_{{j_s}{j_s}}^m \alpha_m \in W \left( ^{j_s}T_*  \right) ,
	\end{align}  
		for all $ s \in \left\lbrace  1, 2, \ldots, p\right\rbrace .$ We now claim that $T_D$ is onto.  Let $ \beta = (\beta_1, \beta_2, \ldots, \beta_p) \in \mathbb{R}^p.$  We note that for any $T=(b_{ij})_{ 1 \leq i,j \leq n} ,~ ^{j_s}T_*$ is a $h \times h $ matrix whose entries are $( \pm b_{ij})$ depending on $ P_{j_s}^+$ and $P_{j_s}^-,$ where $ |P_{j_s}^+ \cup P_{j_s}^-|= h$.  So we can choose $T$ suitably so that  $W(^{j_s}T_*)= \{ \beta_s\}$  for each $ s \in \left\lbrace  1, 2, \ldots, p\right\rbrace.$ This shows that for each  $ s \in \left\lbrace  1, 2, \ldots, p\right\rbrace,$  we get 
		\[ a_{{j_s}{j_s}}^1 \alpha_1 +  a_{{j_s}{j_s}}^2 \alpha_2 +\ldots + a_{{j_s}{j_s}}^m \alpha_m = \beta_s\]
		and so $T_D (\alpha) = \beta, $ where $ \alpha = ( \alpha_1, \alpha_2, \ldots \alpha_m) \in \mathbb{R}^m.$ 
		Thus  $T_D$ is onto and therefore, $m \geq p.$  This along with  Lemma  $\ref{lemmapgeqm}$ completes the proof.
\end{proof}

We now obtain a characterization of the co-Chebyshev subspaces of $ \mathcal{D}_n $ in terms of the $ * $-Property.

\begin{theorem}\label{cochebyshev}
	Let $ \mathcal{A}= \{ A_1 , A_2 ,\ldots, A_m \} $ be linearly independent in $  \mathcal{D}_n ,$  where  $A_k= ((a^k_{11}, a^k_{22}, \ldots, a^k_{nn}))  ,$ for each  $ 1 \leq k \leq m. $  Suppose that the $ i_1$-th, $i_2$-th, $\ldots$, $i_p$-th nonequivalent components  satisfy the $*$-Property.
		 Then $ span\{A_1 , A_2 ,\ldots, A_m \}$ is a co-Chebyshev subspace of $\mathcal{M}_n$ if and only if $p = m$ and $ |P_{i_s}^+\cup P_{i_s}^-|=1$ for all $ s \in \{ 1, 2, \ldots, p\}. $
\end{theorem}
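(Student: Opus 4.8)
The plan is to characterize co-Chebyshev subspaces by combining the coproximinality criterion from Theorem \ref{*property} with a uniqueness analysis based directly on the relations in Theorem \ref{characterization}. Since a co-Chebyshev subspace is by definition a coproximinal subspace in which best coapproximations are always unique, I would first invoke Theorem \ref{*property} to assert that coproximinality forces $p = m$; thus the condition $p=m$ is necessary and I may assume it throughout the uniqueness discussion. It then remains to show that, given $p=m$, uniqueness for every $T \in \mathcal{M}_n$ holds if and only if $|P_{i_s}^+ \cup P_{i_s}^-| = 1$ for all $s$.

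\medskip

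\noindent\textbf{Sufficiency.} Assume $p=m$ and each $|P_{i_s}^+ \cup P_{i_s}^-| = 1$. By Theorem \ref{characterization}, $\sum_k \alpha_k A_k$ is a best coapproximation to $T$ precisely when the scalars $\alpha_1,\ldots,\alpha_m$ satisfy $a_{i_si_s}^1\alpha_1 + \cdots + a_{i_si_s}^m\alpha_m \in W({}^{i_s}T_*)$ for all $s$. When $|P_{i_s}^+ \cup P_{i_s}^-| = 1$, the $*$-associated matrix ${}^{i_s}T_*$ is a $1\times 1$ matrix, so its numerical range $W({}^{i_s}T_*)$ is a single point. Hence the relations become a system of $m$ linear equations in the $m$ unknowns $\alpha_1,\ldots,\alpha_m$, with coefficient matrix $C = (a_{i_si_s}^t)_{1\le s,t\le m}$. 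The argument in the proof of Theorem \ref{*property} shows precisely that this $C$ is invertible (its rank equals $m$, otherwise a further nonequivalent component would satisfy the $*$-Property, contradicting $p=m$). Therefore the system has a unique solution $(\alpha_1,\ldots,\alpha_m)$, so the best coapproximation is unique and the subspace is co-Chebyshev.

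\medskip

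\noindent\textbf{Necessity.} Suppose $span\{A_1,\ldots,A_m\}$ is co-Chebyshev. It is in particular coproximinal, so Theorem \ref{*property} gives $p = m$. Suppose for contradiction that some $|P_{i_s}^+ \cup P_{i_s}^-| = h_s \ge 2$. Then ${}^{i_s}T_*$ is an $h_s \times h_s$ matrix of order at least two, and I would choose $T$ so that ${}^{i_s}T_*$ has a numerical range that is a nondegenerate interval (for instance a nonzero symmetric part), making $W({}^{i_s}T_*)$ contain more than one point. With $C$ still invertible as above, the defining relations of Theorem \ref{characterization} then admit a one-parameter family of solutions $(\alpha_1,\ldots,\alpha_m)$ as the right-hand value ranges over the interval $W({}^{i_s}T_*)$, while all other numerical ranges are chosen to be singletons. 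This produces distinct best coapproximations to this $T$, contradicting co-Chebyshevness. Hence every $|P_{i_s}^+\cup P_{i_s}^-|=1$, completing the proof.

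\medskip

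\noindent I expect the main obstacle to lie in the necessity direction, specifically in verifying that when $h_s \ge 2$ one can genuinely realize a nondegenerate numerical range $W({}^{i_s}T_*)$ while simultaneously pinning the remaining $W({}^{i_t}T_*)$ to single points, and in confirming that the resulting spread in the right-hand side truly yields multiple solutions $(\alpha_1,\ldots,\alpha_m)$ rather than being absorbed. This uses the flexibility in choosing the off-diagonal and cross entries of $T$ (the $b_{uv}$ with $u\in P_{i_s}^+, v\in P_{i_s}^+\cup P_{i_s}^-$, etc.), exactly as exploited in the onto argument within the proof of Theorem \ref{*property}, together with the standard fact that the numerical range of a real $h\times h$ matrix with $h\ge 2$ is a nondegenerate interval whenever its symmetric part is not a scalar multiple of the identity.
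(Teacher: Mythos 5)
Your proposal is correct and follows essentially the same route as the paper: both directions rest on Theorem \ref{*property} forcing $p=m$, the invertibility of the coefficient matrix $C=(a_{i_si_s}^t)$, and the observation that the relations of Theorem \ref{characterization} have singleton right-hand sides exactly when each $|P_{i_s}^+\cup P_{i_s}^-|=1$, while a block of order at least two lets one choose $T$ so that $W({}^{i_s}T_*)$ contains two distinct values, yielding multiple best coapproximations. The paper's necessity argument is exactly your contradiction argument, asserting (as you do, with slightly more detail about the symmetric part) that suitable $T$ realizes two prescribed values in $W({}^{i_s}T_*)$.
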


\begin{proof}
We first prove the sufficient part of the theorem. Since $ p = m$, we note from Theorem  \ref{*property} that $ span\{A_1 , A_2 ,\ldots, A_m \}$ is a coproximinal subspace of $\mathcal{M}_n$. Therefore, for any given $T \in \mathcal{M}_n$, there exist $ \alpha_1, \alpha_2. \ldots, \alpha_m \in \mathbb{R}$ satisfying the following relations:
	\begin{align}\label{relation}
	a_{i_si_s}^1 \alpha_1 +  a_{i_si_s}^2 \alpha_2 +\ldots + a_{i_si_s}^m \alpha_m \in W \left( ^{i_s}T_*  \right) ,
	\end{align} 
	for all $ s \in \{ 1, 2, \ldots, m\}$. Since $ |P_{i_s}^+\cup P_{i_s}^-|=1$, it follows that $ ^{i_s}T_*$ is of order $1$. Moreover, $ W \left( ^{i_s}T_*  \right) = (b_{i_si_s}),$ for every $s \in \{ 1, 2, \ldots, m\}, $ where $ T = (b_{ij})_{1 \leq i,j \leq n}$. Therefore,  relations (\ref{relation}) represent a system of linear equation with coefficient matrix $C = (c_{st})_{1 \leq s,t \leq m},$ where $ c_{st} = a_{i_si_s}^t .$ Following the arguments given in the proof  of Theorem \ref{*property}, we conclude that $C$ is invertible. Hence for any given $T \in \mathcal{M}_n,$ there exists a unique $(\alpha_1, \alpha_2, \ldots, \alpha_m)\in \mathbb{R}^m $ satisfying the relations (\ref{relation}).  Therefore, $ span\{A_1 , A_2 ,\ldots, A_m \}$ is a co-Chebyshev subspace of $\mathcal{M}_n. $\\ 
	
	 We now prove the necessary part of the theorem. Let us assume that $span\{A_1, $ $ A_2 ,\ldots, A_m \}$ is a co-Chebyshev subspace of $\mathcal{M}_n. $ In particular, $span\{A_1 , A_2 ,\ldots, A_m \}$ is a coproximinal subspace of $\mathcal{M}_n. $ Therefore, from Theorem \ref{*property}, we get $p = m.$ Suppose on the contrary $ |P_{i_s}^+\cup P_{i_s}^-|=k_s>1,$ for some $ s \in \{1, 2, \ldots, m\}.$ Therefore, $ ^{i_s}T_*$ is of order $k_s$. Let us consider  $ Q =(q_{rt})_{1 \leq r \leq p, 1\leq t \leq m},$ where $q_{rt}= a_{i_ri_r}^t.$ Since $p=m$, following the arguments given in the proof of Theorem \ref{*property}, $ Q $ is invertible. Now, for any two scalars $c$ and $ d$, where $c \neq d,$ we can choose a suitable $T \in \mathcal{M}_n$ such that $c, d \in  W \left( ^{i_s}T_*  \right).$ Therefore, we can conclude that there exist at least two different sets of  $(\alpha_1,\alpha_2, \ldots , \alpha_m) \in \mathbb{R}^m $ satisfying the relations:
	  	\[a_{i_li_l}^1 \alpha_1 +  a_{i_li_l}^2 \alpha_2 +\ldots + a_{i_li_l}^m \alpha_m \in W \left( ^{i_l}T_*  \right),\]
	for all $ l \in \{ 1, 2, \ldots, m\}$. This contradicts that $span\{A_1 , A_2 ,\ldots, A_m \}$ is a co-Chebyshev subspace of $\mathcal{M}_n. $ Hence the theorem.

\end{proof}

As an immediate application of the above theorem, we record the following interesting observation.

\begin{cor}\label{e}
	$\mathcal{D}_n$ is a co-Chebyshev subspace of $\mathcal{M}_n.$	
\end{cor}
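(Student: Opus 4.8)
The plan is to realize $\mathcal{D}_n$ as the span of the standard basis $\{E_1, E_2, \ldots, E_n\}$, where $E_k = ((\delta_{1k}, \delta_{2k}, \ldots, \delta_{nk}))$ is the diagonal matrix with a single $1$ in the $k$-th diagonal slot, and then apply Theorem~\ref{cochebyshev} directly. Here $m = n$, so the subspace has full dimension $n$ inside the $n$-dimensional space $\mathcal{D}_n$.

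First I would compute the components of $\mathcal{A} = \{E_1, \ldots, E_n\}$ explicitly. By Definition~\ref{component}, the $i$-th component is the $i$-th row of the associated $n \times n$ matrix $\widetilde{A}$, whose columns are $E_1, \ldots, E_n$. But $\widetilde{A}$ is exactly the identity matrix $I_n$, so the $i$-th component is $e_i = (\delta_{i1}, \delta_{i2}, \ldots, \delta_{in})$, the $i$-th standard basis vector of $\mathbb{R}^n$. These $n$ components are pairwise non-equivalent (no standard basis vector is $\pm$ another), so $P_i^+ = \{i\}$ and $P_i^- = \phi$ for every $i$, giving $|P_i^+ \cup P_i^-| = 1$.

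Next I would verify that each of these $n$ components satisfies the $*$-Property. The fastest route is Proposition~\ref{proposition:prop1}: the $i$-th component $e_i$ does not lie in $\mathrm{span}\{e_j : 1 \leq j \leq n,\ j \neq i\}$, since the $e_j$ for $j \neq i$ span the coordinate hyperplane $\{x : x_i = 0\}$, which excludes $e_i$. Hence the $i$-th component satisfies the $*$-Property, and this holds for all $i \in \{1, 2, \ldots, n\}$. Thus the number $p$ of non-equivalent components satisfying the $*$-Property equals $n = m$.

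Finally I would invoke Theorem~\ref{cochebyshev}: we have $p = m$ and $|P_{i}^+ \cup P_{i}^-| = 1$ for every $i \in \{1, \ldots, n\}$, which are precisely the two conditions characterizing a co-Chebyshev subspace. Therefore $\mathcal{D}_n$ is a co-Chebyshev subspace of $\mathcal{M}_n$. There is no real obstacle here, since every hypothesis of the preceding theorem is immediate once the components are identified with the standard basis vectors; the only point requiring a moment's care is confirming that the standard basis of $\mathcal{D}_n$ produces the identity matrix as $\widetilde{A}$ and hence coordinate-vector components, which makes the non-equivalence and the $*$-Property entirely transparent.
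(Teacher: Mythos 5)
Your proposal is correct and follows exactly the paper's route: take the standard basis $\{E_1,\ldots,E_n\}$ of $\mathcal{D}_n$, note each component satisfies the $*$-Property with $|P_i^+\cup P_i^-|=1$, and apply Theorem~\ref{cochebyshev}. The only difference is that you justify the $*$-Property explicitly via Proposition~\ref{proposition:prop1}, where the paper simply calls it trivial to observe.
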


\begin{proof}
	Clearly, $\{ E_k : 1\leq k \leq n \}$ is a basis of $\mathcal{D}_n,$ where  $E_k = (e_{ij}^k)_{1 \leq i,j \leq n} $ is given by 
	
	\begin{eqnarray*}
		e_{ij}^k  & = &  1,~ \mbox{whenever} ~ i = j= k \\ 
		& = &0 ,~ \mbox{otherwise}.
	\end{eqnarray*}
	It is trivial to observe that for all $ 1 \leq i \leq n, $ the $ i $-th component satisfies the $ * $-Property and $|P_i^+ \cup P_i^-|=1$. Therefore, the desired result follows directly from Theorem \ref{cochebyshev}.
\end{proof}

As another important application of the theories developed in the present article, it is possible to characterize the best coapproximation problem in the setting of $ \ell_{\infty}^n, $ for any given $ n \in \mathbb{N}. $ This in turn is equivalent to the following optimization problem:\\

\textbf{Problem:} Let $ a_{ij}, \alpha_k \in \mathbb{R} $ be fixed, where $ 1\leq i \leq m, 1 \leq j, k \leq n. $ Find a necessary and sufficient condition for the existence of $ \beta_1, \beta_2, \ldots, \beta_m \in \mathbb{R} $ such that for any $ c_1, c_2,\ldots, c_m \in \mathbb{R},$  the following inequality holds true:
\begin{equation*}
max\left\lbrace  |\alpha_k - \sum_{i=1}^{m} c_ia_{ik}| : 1 \leq k \leq n \right\rbrace  \geq max \left\lbrace |\sum_{i=1}^{m}\beta_ia_{ik} - \sum_{i=1}^{m} c_ia_{ik}| : 1 \leq k \leq n\right\rbrace .
\end{equation*} 

Moreover, in case existence is guaranteed, find all such $ \beta_1, \beta_2, \ldots, \beta_m \in \mathbb{R}. $ \\

 We  emphasize that the above problem is not entirely trivial, most notably because the existence of a desired solution is not a priori guaranteed. However, it is possible to completely solve the problem (from both theoretical and computational perspectives), by applying the methodology already developed in this article. It is well-known that $ \ell_{\infty}^n $ (endowed with its usual maximum norm) is isometrically isomorphic to $ \mathcal{D}_n $ endowed with the usual operator norm. Indeed, the natural choice map $ \Psi : \ell_{\infty}^n \longrightarrow \mathcal{D}_n, $ taking $ (a_1, a_2, \ldots, a_n) \in \ell_{\infty}^n $ to $ ((a_1, a_2, \ldots, a_n)) \in \mathcal{D}_n $ is easily seen to be the desired isometric isomorphism. This connection allows us to obtain an algorithmic approach to the best coapproximation problem in any given subspace $ \mathbb{Y} $ of $ \ell_{\infty}^n $ via the methods already developed to treat the corresponding best coapproximation problem in the subspace $ \Psi(\mathbb{Y}) $ of $ \mathcal{D}_n. $ It should be noted in this context that our theory essentially translates into characterizing the best coapproximation(s) to any given $ T \in \mathcal{M}_n $ out of any given subspace of $ \ell_{\infty}^n, $ and therefore, the best coapproximation problem in subspaces of $ \ell_{\infty}^n $ is only a particular case of the results developed so far. To illustrate this further, we make note of the following two remarks pertaining to the best coapproximation problem in $ \ell_{\infty}^n: $ \\

\begin{itemize}
	\item $ \ell_{\infty}^n $ is a coproximinal subspace of $ \mathcal{M}_n $ for each $ n \in \mathbb{N}. $ This is simply a reformulation of Corollary \ref{e}.
	\\
	
	\item By using the concept of the $ * $-Property, and the above mentioned isometric isomorphism $ \Psi : \ell_{\infty}^n \longrightarrow \mathcal{D}_n, $ it is quite straightforward to construct subspaces of $ \ell_{\infty}^n $ which are (not) coproximinal. Indeed, in light of the theories developed in this article, any such construction essentially reduces to controlling the number (nonequivalent) of $ i $-th components satisfying the  $ * $-Property, for $ 1 \leq i \leq n. $ As an explicit example, it can be readily verified that $ \mathbb{Y}_1 $ is a coproximinal subspace of $ l_\infty^7, $ whereas $ \mathbb{Y}_2 $ is not, where
	
	\[ \mathbb{Y}_1 = span\left\lbrace ( 6, 1, 4, 3, 3, 1, 1), ( 2, 5, 2, 3, 1, 5, 1), (4, 3, 8, 6, 2, 3, 2), ( 2, 1, 4, 9, 1, 1, 3)  \right\rbrace,  \]
	\[ \mathbb{Y}_2= span\left\lbrace (2, -5, 3, 1, -2, -5, 2),  (-4, 2, 2, -2, -4, 2, -4)\right\rbrace. \] \\

\end{itemize}

In view of our treatment of the theory of coapproximations in $ \ell_{\infty}^n $ spaces, it seems appropriate to end the present article with the following concluding remark:

\begin{remark}
	The theory of best coapproximations in Banach spaces remains a much less exposed area of research, especially from a computational point of view, in comparison to the theory of best approximations. Certainly, this is in part due to the inherently complicated non-linear nature of the best coapproximation problem and the difficulty of the corresponding computations involved in the process. In this context, the reader is encouraged to look up the literature, including \cite{PS, RS}.  Our main focus in this article is to illustrate the following principle in the setting of $ \ell_{\infty}^n $ (or, more generally, for matrices in $ \mathcal{M}_n $ out of subspaces of $ \mathcal{D}_n $): \\
	
	\textit{It is possible to essentially reduce the much harder ``best coapproximation problem" to the well-known and way more simpler  ``existence and uniqueness problem corresponding to a particular system of linear problems", by applying the concept of orthogonality.}\\

	Indeed, using the methodology developed so far, it is now very easy to explicitly produce examples of coproximinal and co-Chebyshev subspaces in the setting of $ \ell_{\infty}^n. $ Therefore, in light of the above fact, a natural query would be to test the validity of such a nicety, in the setting of classical Banach spaces other than $ \ell_{\infty}^n. $

\end{remark}

\end{document}